\documentclass[11pt,preprint]{imsart}
\usepackage{times}
\usepackage[margin=1in]{geometry}
\usepackage{amsfonts,amsthm,amsmath,amssymb,amstext,graphicx}
\usepackage{mathrsfs}
\RequirePackage[OT1]{fontenc}
\RequirePackage{natbib}
\RequirePackage[colorlinks,citecolor=blue,urlcolor=blue]{hyperref}

\allowdisplaybreaks 


\startlocaldefs

\numberwithin{equation}{section}




\newcommand{\Fc}{{ \mathscr{F}}}

\newcommand{\Bc}{{ \mathscr{B}}}

\newcommand{\msf}[1]{\mathsf{#1}}
\newcommand{\Pb}{{\msf{P}}}
\newcommand{\Eb}{{\msf{E}}}

 \newcommand{\PFA}{\mathsf{PFA}}
 \newcommand{\pfa}{\mathsf{PFA}}
\newcommand{\Hyp}{{\mathsf{H}}} 
\newcommand{\const}{\mathsf{const}}
\newcommand{\G}{{\msf{G}}}

\newcommand{\mc}[1]{\mathcal{#1}} 

\newcommand{\Nc}{{\mc{N}}}
\newcommand{\Ac}{{\mc{A}}}

\newcommand{\Lc}{{\mc{L}}}

\newcommand{\mb}[1]{\mathbf{#1}} 

\newcommand{\Xb}{{\mb{X}}}

\newcommand{\Yb}{\mb{Y}}

\newcommand{\Cb}{\mb{C}}

\newcommand{\mbb}[1]{\mathbb{#1}} 

\def\One{\mathchoice{\rm 1\mskip-4.2mu l}{\rm 1\mskip-4.2mu l}%
{\rm 1\mskip-4.6mu l}{\rm 1\mskip-5.2mu l}}
\newcommand\Ind[1]{{\One_{\{#1\}}}}

\newcommand{\class}{{\mbb{C}}_\alpha}
\newcommand{\Rbb}{{\mbb{R}}}
\newcommand{\Zbb}{{\mbb{Z}}}
\newcommand{\Nbb}{{\mbb{N}}}

\newcommand{\mrm}[1]{\mathrm{#1}}
\newcommand{\D}{{\mrm{d}}}

\newcommand{\wtT}{\widetilde{T}}
\newcommand{\wtX}{\widetilde{X}}
\newcommand{\wtS}{\widetilde{S}}


\newcommand{\abs}[1]{\left\vert#1\right\vert}
\newcommand{\set}[1]{\left\{#1\right\}}

\newcommand{\brc}[1]{\left(#1\right)}
\newcommand{\brcs}[1]{\left[#1\right]}

\newcommand{\xra}{\xrightarrow}
\newcommand{\esup}{\operatornamewithlimits{ess\,sup}}
\renewcommand{\le}{\leqslant} 
\renewcommand{\ge}{\geqslant}

\theoremstyle{plain}
\newtheorem{theorem}{Theorem}[section]
\newtheorem{lemma}{Lemma}[section]
\newtheorem{corollary}{Corollary}[section]

\newtheorem{definition}{Definition}[section]
\newtheorem{remark}{Remark}[section]

 \theoremstyle{remark}
\newtheorem{example}{Example}[section]

\endlocaldefs


\begin{document}

\begin{frontmatter}

\title{On Asymptotic Optimality in Sequential Changepoint Detection: Non-iid Case}

\runtitle{Sequential Changepoint Detection: Non-iid Case}

\begin{aug}

\author{\fnms{Alexander G.} \snm{Tartakovsky}\ead[label=e1]{alexg.tartakovsky@gmail.com}}
\address{
 Somers, CT, USA.\\
\printead{e1}}
\runauthor{Alexander Tartakovsky}

\affiliation{Somers, CT 06071}
\end{aug}

\begin{abstract}
We consider a sequential Bayesian changepoint detection problem for a general stochastic model, assuming that the observed data may be dependent and non-identically distributed and the prior 
distribution of
the change point is arbitrary, not necessarily geometric. Tartakovsky and Veeravalli (2004) developed a general asymptotic theory of changepoint detection in the non-iid case  and discrete time, and 
Baron and Tartakovsky (2006) in continuous time assuming the certain stability of the log-likelihood ratio process. This stability property was formulated in terms of the $r$-quick convergence 
of the normalized log-likelihood ratio process to a positive and finite number, which can be interpreted as the limiting Kullback--Leibler information between the ``change'' and ``no change'' hypotheses. 
In these papers, it was conjectured that the $r$-quick convergence can be relaxed in the $r$-complete  convergence, which is typically much easier to verify in particular examples. 
In the present paper, we justify 
this conjecture by showing that the Shiryaev change detection procedure is nearly optimal, minimizing asymptotically (as the probability of false alarm vanishes) the moments of the delay to 
detection up to order $r$
whenever $r$-complete convergence holds. We also study asymptotic properties of the Shiryaev--Roberts detection procedure in the Bayesian context.
\end{abstract}


\begin{keyword}
\kwd{Asymptotic Optimality; Changepoint Problems; Complete Convergence; Hidden Markov Models; Markov Process;  $r$-quick Convergence}
\end{keyword}

 \received{\smonth{1} \syear{2016}}

\end{frontmatter}

\section{Introduction} \label{s:Intro}

In the beginning of the 1960s, \cite{Shiryaev61,ShiryaevTPA63} developed a Bayesian sequential changepoint detection (quickest disorder detection) theory in the 
iid case assuming that the observations are independent and identically distributed (iid) according to a distribution $F$ pre-change and another distribution $G$ post-change and with 
the prior distribution of the change point being geometric.
In particular, \cite{ShiryaevTPA63} proved that the detection procedure that is based on thresholding the posterior probability of the change being active before the current time is strictly optimal, 
minimizing the average delay to detection in the class of procedures with a given probability of false alarm. \cite{TV2005} generalized Shiryaev's theory 
for the non-iid case that covers very general discrete-time non-iid stochastic models and a wide class of prior distributions that include distributions with both exponential tails and heavy tails. 
In particular, it was proved that the Shiryaev detection procedure is asymptotically optimal -- it minimizes the average delay to detection 
as well as higher moments of the detection delay as the probability of 
a false alarm vanishes. \cite{BT2006} developed an asymptotic Bayesian theory for general continuos-time stochastic processes.

The key assumption in general asymptotic theories developed in \cite{BT2006,TV2005}  is a certain stability property of the log-likelihood ratio process between the 
``change'' and ``no-change'' hypotheses, which was expressed in the form of the strong law of large numbers with a positive and finite number and its strengthened $r$-quick  version. 
However, it is not easy (and in fact can be quite difficult) to verify $r$-quick convergence in particular applications and examples. 
For this reason, it was conjectured in \cite{BT2006,TV2005} that essentially the same asymptotic results may be obtained under a weaker
$r$-complete version of the strong law of large numbers for the log-likelihood ratio. In fact, in most examples provided in \cite{BT2006,TV2005} and in the recent book by 
\cite{TNBbook14}, verification of the $r$-quick convergence is replaced by verification of the $r$-complete convergence. 
The main goal of the present article is to confirm this conjecture, proving that the Shiryaev changepoint
detection procedure is asymptotically optimal under the $r$-complete convergence condition for the suitably normalized log-likelihood ratio process.

The rest of the article is organized as follows.  We formulate a general Bayesian changepoint detection problem and present some preliminary results in Section~\ref{sec:Problem}. 
In Section~\ref{sec:Gendiscr}, we consider the Shiryaev change detection procedure in detail and prove that it is asymptotically optimal under mild conditions associated with the $r$-complete convergence 
of the properly normalized log-likelihood ratio. In Section~\ref{sec:SR}, we discuss asymptotic properties and derive operating characteristics of another popular change detection procedure, 
the Shiryaev--Roberts procedure, and show that in general it is not asymptotically optimal in the Bayesian context, but preserves asymptotic optimality properties under certain conditions 
of the prior distribution. In Section~\ref{sec:Examples}, we provide examples of interesting cases where the conditions that we posit in Section~\ref{sec:Gendiscr} and Section~\ref{sec:SR} are satisfied. 
In Section~\ref{sec:Conclusion}, we conclude the paper by discussing the relevance of our results and providing additional remarks. 
Most of the proofs are presented in the main body of the paper, but proofs of some lemmas are given in the Appendix.

\section{Problem setup and preliminaries}\label{sec:Problem}

In the following, we deal only with discrete time $t=n\in \Zbb_+=\{0,1,2,\dots\}$. The continuous time case $t\in  \Rbb_+=[0,\infty)$ is 
more ``delicate'' and will be considered elsewhere. Having said that, let $(\Omega, \Fc ,\Fc_n, \Pb)$,  $n\in \Zbb_+$ be a filtered probability space,
where the sub-$\sigma$-algebra $\Fc_n =\sigma(\Xb^n)$ of $\Fc$ is assumed to be
generated by the process $\Xb^n = \{X_t\}_{1\le t \le n}$ observed up to time $n$. 
Let $\Pb_0$ and $\Pb_\infty$ be two probability measures defined on this space, which are assumed to be mutually locally absolutely continuos, 
so that the restrictions of these measures $\Pb_0^n$ and $\Pb_\infty^n$
to the sigma-algebras $\Fc_n$ are mutually absolutely continuous for all $n\ \ge 1$.

We are interested in the following changepoint problem. In a ``normal''
mode, the observed process $X_n$ follows the measure $\Pb_\infty$, and at an unknown time $\nu$ ($\nu\ge 0$) something happens and $X_n$ follows the measure
$\Pb_0$. The goal is to detect the change as soon as possible after it occurs, subject to a constraint on the risk of false alarms. The exact optimality criteria will be specified 
in Section~\ref{ssec:OptCriteria}.

\subsection{A general changepoint model}\label{ssec:CPDmodel}

Let $p_j(\Xb^n),~j=\infty,0$ denote densities of $\Pb_j^{n}$ (with respect 
to some non-degenerate $\sigma$-finite measure), where  $\Xb^n=(X_1,\dots,X_n)$ is the sample of size $n$. For a fixed $\nu\in\Zbb_+$, the change induces a probability measure 
$\Pb_\nu$ (correspondingly density $p_\nu(\Xb^n)=p(\Xb^n|\nu)$), which is a combination of the pre- and post-change densities:
\begin{equation} \label{noniidmodel}
p_\nu(\Xb^n)  = p_\infty(\Xb^{\nu}) \cdot p_0(\Xb_{\nu+1}^n | \Xb^{\nu}) 
= \prod_{i=1}^{\nu} p_{\infty}(X_i|\Xb^{i-1}) \cdot \prod_{i=\nu+1}^{n} p_{0}(X_i|\Xb^{i-1}) ,
\end{equation}
where $\Xb_m^n=(X_m,\dots,X_n)$ and $p_{j}(X_n|\Xb^{n-1})$ is the conditional density of $X_n$ given $\Xb^{n-1}$. In the sequel we assume that $\nu$ is the serial number of the last pre-change 
observation.  Note that in general the conditional densities $p_0(X_i | \Xb^{i-1})$, $i =\nu+1, \nu+2, \dots$ may depend 
on the changepoint $\nu$, i.e.,  $p_0(X_i | \Xb^{i-1}) =p_0^{(\nu)}(X_i | \Xb^{i-1})$ for $i > \nu$. Certainly the 
densities $p_{j}(X_i|\Xb^{i-1})=p_{j,i}(X_i|\Xb^{i-1})$, $j=0,\infty$ may depend on $i$.

In a particular {\em iid case}, addressed in detail in the past  the observations are independent and identically distributed (iid) with density $f_\infty(x)$ in the normal (pre-change) 
mode and with another density $f_0(x)$ in the abnormal 
(post-change) mode, i.e., in this case, \eqref{noniidmodel} holds with $p_{\infty}(X_i|\Xb^{i-1})=f_\infty(X_i)$ and $p_0(X_i|\Xb^{i-1})=f_0(X_i)$.

We are interested in a Bayesian setting where the change point $\nu$ is assumed
to be a random variable independent of the observations with prior probability distribution $\Pi_n=\Pb(\nu\le n)$, $n\in\Zbb_+$. We also write $\pi_k=\Pb(\nu=k)$ for the probability on non-negative integers,
$k=0,1,2,\dots$. Formally, we allow the change point $\nu$ 
to take negative values too, but the detailed distribution for $k<0$ is not important. The only value we need is the cumulative probability $q= \Pb(\nu<0)$. The probability $\Pb(\nu \le 0)=q+\pi_0$ 
is the probability of the ``atom'' associated with the event that the change already took place before the observations became available. 

In the past, the typical choice for the prior  distribution was (zero modified) geometric distribution,
\begin{equation}\label{Geom}
\Pb(\nu<0)=q \quad \text{and} \quad \Pb(\nu=k) =(1-q) \rho (1-\rho)^k \quad \text{for}~ k=0,1,2,\dots,
\end{equation}
where $0\le q <1$, $0<\rho <1$. 

In the rest of the paper, we consider an arbitrary prior distribution that belongs to the class of distributions that satisfy the following condition:
\vspace{2mm}

\noindent $\Cb$. {\em  For some $0 \le \mu <\infty$,
\begin{equation}\label{Prior}
\lim_{n\to\infty}\frac{|\log(1-\Pi_n)|}{n} = \mu.
\end{equation}
In the case that $\mu=0$, we assume in addition that for some $r\ge 1$
\begin{equation}\label{Prior1}
\sum_{k=0}^\infty \pi_k |\log\pi_k|^r < \infty .
\end{equation}}
If $\mu >0$, then the prior distribution has an exponential right tail. Such distributions, as geometric and discrete versions of  gamma and logistic distributions, i.e., models with bounded hazard
rate, belong to this class. In this case, condition \eqref{Prior1} holds automatically. If $\mu=0$, then the distribution has a heavy tail, i.e., such a distribution belongs to the model with a vanishing hazard rate.
However, we cannot allow this distribution to have a tail that is too heavy, which is guaranteed by condition \eqref{Prior1}.

\subsection{Optimality criteria}\label{ssec:OptCriteria}

Any sequential detection procedure is a stopping time $T$ for the observed process $\{X_n\}_{n\in\Zbb_+}$, i.e., $ T$ is
an extended random variable, such that the event $\{ T = n\}$ belongs to the sigma-algebra $\Fc_n$.  A false alarm is raised whenever
$ T\le \nu$. A good detection procedure should guarantee a small delay to
detection $ T-\nu$ provided that there is no false alarm, while the rate (or risk) of false alarms should be kept at a given, usually low level.

Let $\Pb_k$ and $\Eb_k$ denote the probability and the corresponding expectation when the change occurs at time $\nu=k \in \Zbb_+$. 
In what follows, $\Pb^\pi$ denotes the probability measure on the Borel sigma-algebra in $\Rbb^{\infty}\times \Nbb$ defined as 
$\Pb^\pi(\Ac\times J)=\sum_{k\in J}\,\pi_{k} \Pb_{k}\left(\Ac\right)$ for $\Ac\in \Bc(\Rbb^{\infty})$, $J\subseteq\Nbb$ and $\Eb^\pi$ denotes the expectation with respect to $\Pb^\pi$.

In a Bayesian setting, the risk associated with the delay to detection  is usually measured by the average
delay to detection
\begin{equation} \label{ADDdefdiscr}
\Eb^\pi( T-\nu| T> \nu)= 
\frac{\sum_{k=0}^\infty \pi_k \Eb_k( T-k| T >k) \Pb_\infty( T >k)}{1-\PFA( T)} 
\end{equation}
and the risk associated with a false alarm by the weighted probability of false
alarm (PFA) defined as 
\begin{equation} \label{PFAdefdiscr}
\pfa( T)=\Pb^\pi( T \le \nu)= \sum_{k=1}^\infty \pi_k \Pb_\infty( T \le k) .
\end{equation}
In \eqref{ADDdefdiscr} and \eqref{PFAdefdiscr} we use the fact that $\Pb_k( T > k) = \Pb_\infty( T > k)$ and $\Pb_k( T \le k) = \Pb_\infty( T \le k)$ for $k\in \Zbb_+$ and that $\Pb_\infty( T\le 0) =0$. 

For $0<\alpha <1$, let  $\class = \set{ T: \PFA( T) \le \alpha}$ be a class of detection procedures for which the weighted probability of false alarm does not exceed the predefined level $\alpha$. 
In a Bayesian setting, the goal is to find an optimal procedure that minimizes in the class $\class$ the average delay to detection, i.e.,
\[
\text{find}~~  T_{\rm opt}\in \class ~~ \text{such that} ~~ \Eb^\pi( T_{\rm opt}-\nu| T_{\rm opt} > \nu) = \inf_{ T\in\class}\Eb^\pi( T-\nu| T> \nu) .
\]
However, except for the iid case, the solution of this problem is not tractable. For this reason, we address the asymptotic problem of minimizing the average detection delay 
as $\alpha$ approaches zero. For practical
purposes, it is also interesting to consider the problem of minimizing higher moments of the detection delay $\Eb^\pi[( T-\nu)^m|  T >\nu]$ for some $m\ge 1$, i.e., to find a first-order asymptotically optimal
detection procedure $ T_{\rm o}\in\class$ that satisfies
\begin{equation}\label{FOAOcriterion}
\lim_{\alpha\to0} \frac{\Eb^\pi[( T_{\rm o}-\nu)^m|  T_{\rm o} >\nu]}{\inf_{ T\in\class}\Eb^\pi[( T-\nu)^m|  T >\nu]} =1.
\end{equation}

\subsection{Change detection procedures}\label{ssec:Procedures}

Let ``$\Hyp_k: \nu=k$'' and ``$\Hyp_\infty: \nu=\infty$'' be the hypotheses that the change occurs at the point $0\le k<\infty$ and that the change never happens, respectively. 
Then, using \eqref{noniidmodel}, we obtain that the likelihood ratio (LR) between these hypotheses when the sample $\Xb^n=(X_1,\dots,X_n)$ is observed is
\[
 \frac{\D \Pb_k^n}{\D \Pb_\infty^n} = \prod_{i=k+1}^n \frac{p_0(X_i|\Xb^{i-1})}{p_\infty(X_i|\Xb^{i-1})}, \quad k <n .
\]
Write $\Lc_i =p_0(X_i|\Xb^{i-1})/p_\infty(X_i|\Xb^{i-1})$ and introduce the normalized average (weighted) LR
\[
\Lambda_n = \frac{1}{\Pb(\nu \ge n)} \brc{q \prod_{i=1}^n \Lc_i + \sum_{k=0}^{n-1} \pi_k \prod_{i=k+1}^{n} \Lc_i}, \quad n \in \Zbb_+.
\]
Note that $\Lambda_0=q/(1-q)$. Let $g_n=\Pb(\nu < n | \Xb^n)$ stand for the posterior probability of the change being in effect up to time $n$.  \cite{ShiryaevTPA63} proved that, 
in the iid case, the detection procedure $T_a= \inf\set{n: g_n \ge a}$ is strictly optimal for every $0<\alpha<1$ -- it minimizes the average detection delay $\Eb^\pi( T-\nu| T> \nu)$ if $a=a_\alpha$ 
is selected so that $\PFA(T_a)=\alpha$ and the prior distribution is geometric. 
As in \cite{TNBbook14, TV2005}, we refer to this procedure as the Shiryaev detection procedure in the general non-iid case too. We now show that $\Lambda_n = g_n/(1-g_n)$, 
so that the Shiryaev procedure can be written as
\begin{equation}\label{ShirProc}
T_A = \inf\set{n \ge 1: \Lambda_n \ge A}, \quad A >0.
\end{equation}
Indeed, $g_n =\sum_{k=-\infty}^{n-1} \Pb(\nu=k|\Xb^n)$, where 
\[
\begin{aligned}
\Pb(\nu=k|\Xb^n) & = \frac{\pi_k \prod_{j=1}^k p_\infty(X_i|\Xb^{i-1}) \prod_{i=k+1}^{n} p_0(X_i|\Xb^{i-1})}{\sum_{k=-\infty}^\infty \pi_k \prod_{j=1}^k p_\infty(X_i|\Xb^{i-1}) \prod_{i=k+1}^{n} p_0(X_i|\Xb^{i-1})}
\\
& = \frac{\pi_k \prod_{i=k+1}^{n} \Lc_i}{q \prod_{i=1}^n \Lc_i + \sum_{k=0}^{n-1} \pi_k \prod_{i=k+1}^{n} \Lc_i + \Pb(\nu\ge n)},
\end{aligned}
\]
and we obtain
\[
g_n= \frac{q \prod_{i=1}^n \Lc_i + \sum_{k=0}^{n-1}\pi_k \prod_{i=k+1}^{n} \Lc_i}{q \prod_{i=1}^n \Lc_i + \sum_{k=0}^{n-1} \pi_k \prod_{i=k+1}^{n} \Lc_i + \Pb(\nu\ge n)} .
\]
Therefore, 
\[
\frac{g_n}{1-g_n} = \frac{1}{\Pb(\nu \ge n)} \brc{q \prod_{i=1}^n \Lc_i + \sum_{k=1}^{n-1} \pi_k \prod_{i=k+1}^{n} \Lc_i} =\Lambda_n.
\]
In particular, in the popular case of zero modified geometric prior \eqref{Geom}, the statistic $\Lambda_n$ is
\begin{equation}\label{LambdaGeom}
\Lambda_n = \frac{q}{1-q} \prod_{i=1}^n \brc{\frac{\Lc_i}{1-\rho}}  + \rho \sum_{k=1}^n \prod_{i=k}^n \brc{\frac{\Lc_i}{1-\rho}} .
\end{equation}

In the following, to avoid triviality, we assume that $A>q/(1-q)$, since otherwise $T_A=0$ with probability $1$.

By Lemma~7.2.1 in \cite{TNBbook14},
\begin{equation}\label{PFAineq}
\PFA(T_A) \le 1/(1+A) \quad \text{for every}~ A>  q/(1-q),
\end{equation}
 and therefore setting $A=A_\alpha=(1-\alpha)/\alpha$ guarantees that $T_A\in \class$.
 
 Another popular change detection procedure is the Shiryaev--Roberts (SR) procedure (due to \cite{ShiryaevTPA63} and \cite{Roberts66}) given by the stopping time
 \begin{equation}\label{SRproc}
 \wtT_B=\inf\set{n\ge 1: R_n \ge B}, \quad B>0,
 \end{equation}
 where the statistic $R_n$, the SR statistic, is given by
 \begin{equation}\label{SRstat}
 R_n = \sum_{k=1}^n \prod_{i=k}^n \Lc_i, \quad n \ge 0 ~~ (R_0=0). 
 \end{equation}
The statistic $R_n$ can be viewed as a limit of the statistic $\Lambda_n/\rho$ as $\rho\to 0$  when the prior distribution of the change point is geometric \eqref{Geom} with $q=0$. 
Indeed, see \eqref{LambdaGeom}. 

\subsection{$r$-Quick convergence versus $r$-complete convergence} \label{ssec:rcomplete}

Introduce the LLRs
\[
Z_i = \log \frac{p_0(X_i|\Xb^{i-1})}{p_\infty(X_i|\Xb^{i-1})}, \quad \lambda_{k+n}^k = \log \frac{\D\Pb_k^{k+n}}{\D\Pb_\infty^{k+n}} = \sum_{i=k+1}^{k+n} Z_i , \quad n \ge 1.
\]

We need the following two definitions. 

\begin{definition}[{\rm \cite{Lai1976,TNBbook14}}]\label{def:rquick}
Let $r>0$. For $k=0,1,2,\dots$, we say that the normalized LLR $n^{-1}  \lambda_{k+n}^k$ converges $r$-quickly to a constant $I$ as $n\to\infty$ under probability $\Pb_k$ if $\Eb_k [L_k(\varepsilon)]^r <\infty$ for all 
$\varepsilon>0$, where $L_k(\varepsilon) = \sup\set{n\ge 1: |n^{-1}  \lambda_{k+n}^k-I |> \varepsilon}$ ($\sup\{\varnothing\}=0$) is the last time when $n^{-1}  \lambda_{k+n}^k$ 
leaves the interval $[I-\varepsilon, I+\varepsilon]$. 
\end{definition}

\begin{definition}[{\rm \cite{TNBbook14}}]\label{def:rcomplete}
Let $r>0$. For $k=0,1,2,\dots$, we say that the normalized LLR $n^{-1}  \lambda_{k+n}^k$ converges $r$-completely to a constant $I$ as $n\to \infty$ under probability $\Pb_k$ 
if  for all $\varepsilon>0$, 
\begin{equation}\label{rcompletedef}
\sum_{n=1}^\infty n^{r-1} \Pb_k\set{\abs{n^{-1}\lambda_{k+n}^k - I} > \varepsilon} < \infty.
\end{equation}
(For $r=1$ this mode of convergence was introduced by \cite{HsuRobbins47}.)
\end{definition}

Note first that in general $r$-quick convergence is a stronger property than $r$-complete convergence. See Lemma~2.4.1 in \cite{TNBbook14}. More importantly, checking $r$-quick convergence in applications
is often much more difficult than checking $r$-complete convergence.

In the discrete time case, \cite{TV2005} developed a general asymptotic Bayesian theory of changepoint detection assuming that the LLR obeys the strong law of large numbers (SLLN) with some positive and 
finite constant $I$, i.e.,
\begin{equation}\label{SLLN}
\frac{1}{n} \lambda_{k+n}^k \xra[n\to\infty]{\Pb_k-{\rm a.s.}} I \quad \text{for all}~ k\in \Zbb_+,
\end{equation}
with a certain rate of convergence expressed via the $r$-quick convergence, specifically assuming in addition that for some $r\ge 1$
\begin{equation}\label{rquickaver}
\sum_{k=0}^\infty \pi_k \Eb_k [L_k(\varepsilon)]^r <\infty.
\end{equation}
A similar development was performed by \cite{BT2006} in continuos time, assuming that
\[
\int_0^\infty\Eb_u [L_u(\varepsilon)]^r \, \D \Pi_u <\infty.
\]
However, as we already mentioned, verification of the latter $r$-quick convergence condition in particular examples is not an easy task. 

In \cite{BT2006,TV2005}, it was conjectured that all asymptotic results, including near optimality of the Shiryaev procedure (in the sense defined in \eqref{FOAOcriterion}), 
hold if the $r$-quick convergence condition \eqref{rquickaver} is weakened into
the $r$-complete convergence
\[
\sum_{k=0}^\infty \pi_k \brcs{\sum_{n=1}^\infty n^{r-1} \Pb_k\set{\abs{n^{-1}\lambda_{k+n}^k - I} > \varepsilon}} < \infty
\]
(with an obvious modification in continuous time). In the following sections, we justify this conjecture.

\section{Asymptotic operating characteristics and optimality of the Shiryaev procedure}\label{sec:Gendiscr} 

In this section, we present the main results related to asymptotic optimality of the Shiryaev detection procedure in the general non-iid case as well as in the case of independent observations.

\subsection{The non-iid case}\label{ssec:Gencase}

The following lemma, that establishes the asymptotic lower bounds for moments of the detection delay, will be used throughout  the paper. While its proof may be found in \cite{TV2005}, 
parts of the proof are scattered in \cite{TV2005}, and for the sake of convenience we provide a sketch of the improved version of the proof  in the Appendix.  

\begin{lemma}\label{Lem:LB}
Let $T_A$ be the Shiryaev changepoint detection procedure defined in \eqref{ShirProc}. Let, for some $\mu\ge 0$, the prior distribution of the change point satisfy condition \eqref{Prior}. 
Assume that for some positive and finite $I$ 
\begin{equation}\label{Pmaxgen}
\lim_{M\to\infty} \Pb_k\brc{\frac{1}{M} \max_{1 \le n \le M}\lambda_{k+n}^k \ge (1+\varepsilon) I} = 0 \quad \text{for all}~ \varepsilon >0~ \text{and all}~ k \in \Zbb_+ .
\end{equation}  
Then, for all $m>0$, 
\begin{equation}\label{LBinclass}
\liminf_{\alpha\to0} \frac{\inf_{ T\in\class} \Eb^\pi\brcs{\brc{ T-\nu}^m| T >\nu}}{|\log \alpha|^m} \ge  \frac{1}{(I+\mu)^m}.
\end{equation}
and
\begin{equation}\label{LBTA1}
\liminf_{A\to\infty} \frac{\Eb^\pi\brcs{\brc{T_A-\nu}^m|T_A >\nu}}{(\log A)^m} \ge  \frac{1}{(I+\mu)^m}.
\end{equation}
\end{lemma}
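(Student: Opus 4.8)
The plan is to prove \eqref{LBinclass} by a change-of-measure argument tied to the false-alarm constraint, and then to deduce \eqref{LBTA1} from it via \eqref{PFAineq}. I would fix $\varepsilon\in(0,1)$ and set $N_\alpha=\lfloor(1-\varepsilon)|\log\alpha|/(I+\mu)\rfloor$. For any $T\in\class$, Markov's inequality gives
\[
\Eb^\pi\brcs{(T-\nu)^m\mid T>\nu}\ge N_\alpha^m\,\Pb^\pi(T-\nu\ge N_\alpha\mid T>\nu)\ge N_\alpha^m\brc{1-\frac{\Pb^\pi(\nu<T\le\nu+N_\alpha)}{1-\alpha}},
\]
so it suffices to show that $\sup_{T\in\class}\Pb^\pi(\nu<T\le\nu+N_\alpha)\to0$ as $\alpha\to0$; letting $\varepsilon\downarrow0$ then yields \eqref{LBinclass}.

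Two facts drive that estimate. First, from $\PFA(T)=\sum_{k\ge1}\pi_k\Pb_\infty(T\le k)\le\alpha$ and monotonicity of $k\mapsto\Pb_\infty(T\le k)$ one obtains the ``reverse'' bound $\Pb_\infty(T\le n)\le\alpha/\Pb(\nu\ge n)$ for every $n\ge1$; together with condition \eqref{Prior}, i.e.\ $\Pb(\nu\ge n)=e^{-(\mu+o(1))n}$, this quantifies how early $T$ may stop under $\Pb_\infty$. Second, since on $\{T=n\}$ with $n>k$ the density of $\Pb_k^n$ with respect to $\Pb_\infty^n$ equals $e^{\lambda_n^k}$, we have the change-of-measure identity $\Pb_k(B)=\Eb_\infty\brcs{\One_B\,e^{\lambda_T^k}}$ for any $B\subseteq\{k<T<\infty\}$ (only finitely many values of $T$ will enter below, so no summability issue arises).

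To prove the required limit, write $\Pb^\pi(\nu<T\le\nu+N_\alpha)=\sum_{k\ge0}\pi_k\Pb_k(k<T\le k+N_\alpha)$, fix an auxiliary integer $L\ge1$, and split the sum at $k=L$. For $k>L$ use $\Pb_k(\cdot)\le1$, so that tail contributes at most $\Pb(\nu>L)$. For $k\le L$, apply the change of measure and split according to whether $\lambda_T^k\le(1+\varepsilon)IN_\alpha$ or not:
\[
\Pb_k(k<T\le k+N_\alpha)\le e^{(1+\varepsilon)IN_\alpha}\,\Pb_\infty(T\le k+N_\alpha)+\Pb_k\brc{\max_{1\le n\le N_\alpha}\lambda_{k+n}^k>(1+\varepsilon)IN_\alpha}.
\]
In the first summand bound $\Pb_\infty(T\le k+N_\alpha)\le\alpha/\Pb(\nu\ge L+N_\alpha)$; then the choice of $N_\alpha$ and $|\log\Pb(\nu\ge L+N_\alpha)|=(\mu+o(1))N_\alpha$ lead, after a short computation of exponents, to
\[
e^{(1+\varepsilon)IN_\alpha}\,\frac{\alpha}{\Pb(\nu\ge L+N_\alpha)}=\alpha^{\,\varepsilon(\varepsilon I+\mu)/(I+\mu)+o(1)}\xrightarrow[\alpha\to0]{}0
\]
for each fixed $L$ (the cases $\mu>0$ and $\mu=0$ being handled the same way). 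The second summand tends to $0$ by condition \eqref{Pmaxgen} with $M=N_\alpha\to\infty$, and $\sum_{k\le L}$ has only $L+1$ terms. Hence $\limsup_{\alpha\to0}\sup_{T\in\class}\Pb^\pi(\nu<T\le\nu+N_\alpha)\le\Pb(\nu>L)$ for every $L$, and letting $L\to\infty$ finishes the proof of \eqref{LBinclass}.

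Finally, \eqref{LBTA1} is immediate from \eqref{LBinclass}: by \eqref{PFAineq} one has $T_A\in\mbb{C}_{1/(1+A)}$, so applying \eqref{LBinclass} with $\alpha=1/(1+A)$ gives $\Eb^\pi[(T_A-\nu)^m\mid T_A>\nu]\ge(1+o(1))(\log(1+A))^m/(I+\mu)^m$ as $A\to\infty$, and $\log(1+A)\sim\log A$. I expect the main obstacle to be the exponent bookkeeping for the first summand above: the truncation level $(1+\varepsilon)IN_\alpha$ for $\lambda_T^k$ must be tuned so that the exponential factor $e^{(1+\varepsilon)IN_\alpha}$ is exactly overwhelmed by $\alpha\cdot\Pb(\nu\ge L+N_\alpha)^{-1}$, and it is this balance that forces the constant $1/(I+\mu)$, blending the Kullback--Leibler rate $I$ with the prior-tail rate $\mu$. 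Keeping all estimates uniform over $T\in\class$ and handling the $o(1)$ in \eqref{Prior} carefully when $\mu=0$ (where the truncation at $L$ does the work) is the remaining technical point.
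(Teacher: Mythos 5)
Your proposal is correct and follows essentially the same route as the paper's proof: the Markov-inequality reduction to showing $\sup_{T\in\class}\Pb^\pi(0<T-\nu<N_{\alpha,\varepsilon})\to0$, the change-of-measure bound with the LLR truncated at $(1+\varepsilon)I N_\alpha$ (the paper's cited inequality (3.6) of Tartakovsky and Veeravalli), the constraint-based bound $\Pb_\infty(T\le n)\le\alpha/\Pb(\nu\ge n)$, and conditions \eqref{Prior} and \eqref{Pmaxgen} to kill the two resulting terms. The only cosmetic differences are that you truncate the sum over $k$ at a fixed $L$ and let $L\to\infty$ after $\alpha\to0$ (the paper uses $K_{\alpha,\varepsilon}\sim c|\log\alpha|$ with dominated convergence) and that you deduce \eqref{LBTA1} directly from \eqref{LBinclass} via \eqref{PFAineq}, which is exactly the paper's suggestion of replacing $\alpha$ by $1/A$.
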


Define
\begin{equation}\label{Upsilon}
\Upsilon_{k,r}(\varepsilon) = \sum_{n=1}^\infty n^{r-1} \Pb_k\brc{\frac{1}{n} \lambda_{k+n}^k < I  - \varepsilon} .
\end{equation}
Recall that by \eqref{PFAineq}, $\PFA(T_A) \le (1+A)^{-1}$ for any $0<A<q/(1-q)$, which implies that $\PFA(T_{A_\alpha}) \le \alpha$ (i.e., $T_{A_\alpha}\in \class$) for any $0<\alpha< 1-q$ 
if $A=A_\alpha=(1-\alpha)/\alpha$.

The following theorem is the main result in the general non-iid case, which shows that the Shiryaev detection procedure is asymptotically optimal under mild conditions for the observations and prior distributions.
 
\begin{theorem}\label{Th:FOAOgen}
Let $T_A$ be the Shiryaev changepoint detection procedure defined in \eqref{ShirProc}. Let $r\ge 1$ and let the prior distribution of the change point satisfy condition {\rm ($ \Cb$)}. Assume that for some 
number $0<I<\infty$ condition \eqref{Pmaxgen} is satisfied and  that the following condition holds as well \begin{equation}\label{rcompleteleftgen}
\sum_{k=0}^\infty \pi_k \Upsilon_{k,r}(\varepsilon)  <\infty \quad \text{for all}~ \varepsilon >0 .
\end{equation}  

\noindent {\rm \bf (i)} Then for all $0<m \le r$
\begin{equation} \label{MADDAOgen1}
\lim_{A\to\infty}  \frac{\Eb^\pi[(T_A-\nu)^m | T_A > \nu]}{(\log A)^m} =\frac{1}{(I+\mu)^m}.
\end{equation}

\noindent {\rm \bf (ii)}   If $A=A_\alpha=(1-\alpha)/\alpha$, where $0<\alpha<1-q$, then $T_{A_\alpha}\in \class$ and it is asymptotically optimal as $\alpha\to0$ in class $\class$, minimizing moments of the detection delay up to order $r$, i.e., for all $0<m \le r$,
\begin{equation}\label{FOAOmomentsgen}
\lim_{\alpha\to0} \frac{\inf_{T \in \class} \Eb^\pi[(T-\nu)^m | T > \nu] }{\Eb^\pi[(T_{A_\alpha}-\nu)^m | T_{A_\alpha} > \nu]} = 1 .
\end{equation}
Also, the following first-order asymptotic approximations hold:
\begin{equation} \label{MADDAOgen}
\inf_{T \in \class} \Eb^\pi[(T-\nu)^m | T> \nu] \sim  \Eb^\pi[(T_{A_\alpha}-\nu)^m | T_{A_\alpha} > \nu] \sim \brc{\frac{|\log\alpha|}{I+\mu}}^m \quad \text{as $\alpha \to 0$}.
\end{equation}
This assertion also holds if $A=A_\alpha$ is selected so that $\PFA(T_{A_\alpha}) \le \alpha$ and $\log A_\alpha\sim |\log\alpha|$ as $\alpha\to0$.
\end{theorem}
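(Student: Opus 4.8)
The plan is as follows. The lower bounds \eqref{LBinclass} and \eqref{LBTA1} are already supplied by Lemma~\ref{Lem:LB}, whose hypothesis \eqref{Pmaxgen} is assumed here, so the whole task reduces to establishing the matching upper bound
\[
\limsup_{A\to\infty}\frac{\Eb^\pi[(T_A-\nu)^m\indicator{T_A>\nu}]}{(\log A)^m}\le\frac{1}{(I+\mu)^m}\qquad\text{for all }0<m\le r .
\]
Granting this, part~(i) follows by combining it with \eqref{LBTA1} and noting $\Pb^\pi(T_A>\nu)=1-\PFA(T_A)\ge A/(1+A)\to1$; and part~(ii) follows by taking $A=A_\alpha=(1-\alpha)/\alpha$, for which $T_{A_\alpha}\in\class$ by \eqref{PFAineq} and $\log A_\alpha\sim|\log\alpha|$, so part~(i) gives $\Eb^\pi[(T_{A_\alpha}-\nu)^m\mid T_{A_\alpha}>\nu]\sim(|\log\alpha|/(I+\mu))^m$ and a sandwich with the lower bound \eqref{LBinclass} yields \eqref{FOAOmomentsgen} and \eqref{MADDAOgen}. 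The final sentence of the theorem is obtained in exactly the same way, using only $\PFA(T_{A_\alpha})\le\alpha$ and $\log A_\alpha\sim|\log\alpha|$.

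To prove the upper bound I would first record a pointwise minorant for the Shiryaev statistic: since $\Lambda_n=\Pb(\nu\ge n)^{-1}\bl(q\prod_{i=1}^{n}\Lc_i+\sum_{\ell=0}^{n-1}\pi_\ell\prod_{i=\ell+1}^{n}\Lc_i\br)$ with all summands nonnegative, retaining only the $\ell=k$ term gives $\Lambda_{k+j}\ge\pi_k\,\Pb(\nu\ge k+j)^{-1}e^{\lambda_{k+j}^k}$ for every $j\ge1$. Hence on $\{T_A>k+j\}\subseteq\{\Lambda_{k+j}<A\}$ one has $\lambda_{k+j}^k<\log A+|\log\pi_k|-|\log\Pb(\nu\ge k+j)|$, and by \eqref{Prior} there is $m_0=m_0(\varepsilon')$, \emph{independent of $k$}, with $|\log\Pb(\nu\ge m)|\ge(\mu-\varepsilon')m$ for all $m\ge m_0$. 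Fixing $\varepsilon>0$ and putting $\varepsilon''=\varepsilon+\varepsilon'$ (small enough that $I+\mu-\varepsilon''>0$), it follows that for $A$ large (uniformly in $k$) and $j-1\ge N_A(k):=\lceil(\log A+|\log\pi_k|)/(I+\mu-\varepsilon'')\rceil$ one has $\{T_A\ge k+j\}\subseteq\{(j-1)^{-1}\lambda_{k+j-1}^k<I-\varepsilon\}$. Using the identity $\Eb_k[(T_A-k)^m\indicator{T_A>k}]=\sum_{j\ge1}(j^m-(j-1)^m)\,\Pb_k(T_A\ge k+j)$, splitting the series at $N_A(k)+1$, bounding the head by $(N_A(k)+1)^m$ and the tail (after re-indexing $n=j-1$ and using $j^m-(j-1)^m\le C_m n^{m-1}\le C_m n^{r-1}$, valid since $m\le r$) by $C_m\,\Upsilon_{k,r}(\varepsilon)$, I obtain, for all large $A$,
\[
\Eb_k[(T_A-k)^m\indicator{T_A>k}]\le\Bl(\frac{\log A+|\log\pi_k|}{I+\mu-\varepsilon''}+2\Br)^m+C_m\,\Upsilon_{k,r}(\varepsilon).
\]

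It then remains to average against $\pi_k$. The term $C_m\sum_k\pi_k\Upsilon_{k,r}(\varepsilon)$ is finite by \eqref{rcompleteleftgen}, hence $o((\log A)^m)$. For the first term, with $L=L(A)=\log A/(I+\mu-\varepsilon'')$ and $b_k=|\log\pi_k|/(I+\mu-\varepsilon'')+2$, I would expand $(L+b_k)^m$ --- binomially when $m$ is an integer, and by convexity of $x\mapsto x^m$ plus a crude splitting of the lower-order factor, respectively subadditivity of $x\mapsto x^m$, otherwise --- to get $\sum_k\pi_k(L+b_k)^m\le L^m+o(L^m)$, the cross terms being controlled because $\sum_k\pi_k b_k^{j}<\infty$ for every $0<j\le m$; this is precisely where condition \eqref{Prior1}, i.e.\ $\sum_k\pi_k|\log\pi_k|^r<\infty$ (automatic when $\mu>0$), is used. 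Dividing by $\Pb^\pi(T_A>\nu)\to1$ and then letting $\varepsilon''\downarrow0$ gives \eqref{MADDAOgen1}. Terms with $\pi_k=0$ are harmless, being absent from every $\pi_k$-weighted sum.

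I expect the main obstacle to be the uniformity needed to recover the \emph{sharp} constant $(I+\mu)^{-m}$, rather than any single estimate: the onset $m_0$ of the regime $|\log\Pb(\nu\ge m)|\approx\mu m$ must not depend on $k$ (so the head of the series has the right length for all $k$ simultaneously), and the $k$-dependent correction $|\log\pi_k|$ inside $N_A(k)$, once raised to the $m$-th power and averaged, must be genuinely of lower order than $(\log A)^m$. Both are delivered by condition ($\Cb$) --- \eqref{Prior} for the former, \eqref{Prior1} for the latter --- and the bookkeeping must be careful enough not to pick up stray multiplicative factors from crude $(a+b)^m$-type inequalities, which would destroy the first-order optimality.
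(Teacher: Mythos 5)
Your proposal is correct and follows essentially the same route as the paper: the lower bound is taken from Lemma~\ref{Lem:LB}, and your pointwise minorization $\Lambda_{k+j}\ge \pi_k\,\Pb(\nu\ge k+j)^{-1}e^{\lambda_{k+j}^k}$ together with the head/tail splitting of the moment series is exactly the content of Lemma~\ref{Lem:UpperEk}, after which averaging over $\pi_k$ using \eqref{rcompleteleftgen} and \eqref{Prior1} and dividing by $1-\PFA(T_A)\ge A/(1+A)$ gives the matching upper bound. The only cosmetic differences are that you treat each moment $0<m\le r$ directly (the paper bounds the $r$-th moment and lets lower moments follow) and you spell out the uniformity in $k$ of the tail estimate from \eqref{Prior}, which the paper handles more tersely inside the lemma's proof.
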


In order to prove this theorem we need the following lemma, the proof of which is given in the Appendix.

\begin{lemma}\label{Lem:UpperEk}
Let $r\ge 1$ and let the prior distribution of the change point satisfy condition {\rm ($ \Cb$)}. Then for a sufficiently large $A$, any $0<\varepsilon <I+\mu$ and all $k\in\Zbb_+$,
\begin{equation}\label{Ekineq}
\Eb_k[\brc{T_A-k}^+]^r \le  \brc{1+\frac{\log (A/\pi_k)}{I+\mu-\varepsilon}}^r + r 2^{r-1} \, \sum_{n=1}^\infty n^{r-1} \Pb_k\brc{\frac{1}{n} \lambda_{k+n}^k < I  - \varepsilon-\delta},
\end{equation}
where $\delta\to0$ as $A\to\infty$. 

If the prior distribution is geometric \eqref{Geom} with $q=0$, i.e., $\pi_k=\rho(1-\rho)^k$, $k\in\Zbb_+$, then for any $A>0$, any $0<\varepsilon <I+\mu$ and all $k\in\Zbb_+$
\begin{equation}\label{EkineqGeom}
\Eb_k[\brc{T_A-k}^+]^r \le  \brc{1+\frac{\log (A/\rho)}{I+\mu-\varepsilon}}^r + r 2^{r-1} \, \sum_{n=1}^\infty n^{r-1} \Pb_k\brc{\frac{1}{n} \lambda_{k+n}^k < I  - \varepsilon},
\end{equation}
where $\mu=-\log(1-\rho)$.
\end{lemma}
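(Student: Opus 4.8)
The plan is to establish the bound on $\Eb_k[(T_A-k)^+]^r$ by first controlling the event $\{T_A > k+n\}$ in terms of the log-likelihood ratio process and then summing a tail series. I would start from the defining inequality of the Shiryaev procedure: on the event $\{T_A > k+n\}$ we have $\Lambda_{k+n} < A$. Using the lower bound on $\Lambda_{k+n}$ coming from retaining only the single summand indexed by $k$, namely $\Lambda_{k+n} \ge \frac{\pi_k}{\Pb(\nu\ge k+n)} \prod_{i=k+1}^{k+n}\Lc_i = \frac{\pi_k}{1-\Pi_{k+n-1}} \exp\{\lambda_{k+n}^k\}$, one gets that $\{T_A > k+n\}$ forces $\lambda_{k+n}^k < \log(A/\pi_k) + \log(1-\Pi_{k+n-1})$. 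By condition \eqref{Prior}, $|\log(1-\Pi_{k+n-1})| = (\mu + o(1))(k+n)$ as $n\to\infty$, so for large $n$ the right-hand side is at most $\log(A/\pi_k) - (\mu - \delta') n$ for any small $\delta'>0$; hence $\{T_A > k+n\} \subseteq \{\tfrac1n \lambda_{k+n}^k < \tfrac{\log(A/\pi_k)}{n} - \mu + \delta'\}$.

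Next I would split according to whether $n$ is large or small relative to $\log(A/\pi_k)$. Fix $0<\varepsilon<I+\mu$ and set $N_k = \lceil \log(A/\pi_k)/(I+\mu-\varepsilon)\rceil$. For $n \ge N_k$ one has $\tfrac{\log(A/\pi_k)}{n} \le I+\mu-\varepsilon$, so the inclusion above gives $\{T_A > k+n\} \subseteq \{\tfrac1n\lambda_{k+n}^k < I - \varepsilon + \delta'\}$; absorbing the asymptotic correction from the prior tail into a single quantity $\delta = \delta(A)\to 0$ as $A\to\infty$ yields $\{T_A > k+n\} \subseteq \{\tfrac1n\lambda_{k+n}^k < I - \varepsilon - \delta\}$ after relabelling $\varepsilon$ slightly (I will write it exactly as in \eqref{Ekineq}). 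Then I use the standard identity $\Eb_k[(T_A-k)^+]^r = \sum_{n\ge 1} [n^r - (n-1)^r]\,\Pb_k(T_A - k \ge n) \le r\sum_{n\ge1} n^{r-1}\Pb_k(T_A > k+n-1)$, split the sum at $N_k$, bound the first $N_k$ terms trivially by $\sum_{n=1}^{N_k} r n^{r-1} \le N_k^r \le (1 + \log(A/\pi_k)/(I+\mu-\varepsilon))^r$, and bound the tail $\sum_{n > N_k} r n^{r-1}\Pb_k(\tfrac1n\lambda_{k+n}^k < I-\varepsilon-\delta)$, extended to all $n\ge 1$, by the full series; the combinatorial factor $r2^{r-1}$ comes from the elementary estimate $n^r-(n-1)^r \le r n^{r-1}$ together with a reindexing shift $n-1\mapsto n$ that costs at most a factor $2^{r-1}$ on $n^{r-1}$.

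For the geometric case the argument simplifies because $1-\Pi_{k+n-1} = (1-\rho)^{k+n}$ exactly, so $\log(1-\Pi_{k+n-1}) = -\mu(k+n)$ with $\mu = -\log(1-\rho)$ holding for every $n$, not just asymptotically. Then the inclusion $\{T_A > k+n\}\subseteq\{\tfrac1n\lambda_{k+n}^k < \tfrac{\log(A/\rho)}{n} - \mu\}$ is exact, no $\delta$ is needed, and the same splitting at $N_k = \lceil\log(A/\rho)/(I+\mu-\varepsilon)\rceil$ delivers \eqref{EkineqGeom} for all $A>0$.

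The main obstacle I anticipate is handling the $o(1)$ term in the prior-tail asymptotics uniformly enough: the convergence in \eqref{Prior} is as $k+n\to\infty$, so for fixed small $k$ and moderate $n$ the approximation $|\log(1-\Pi_{k+n-1})|\approx \mu(k+n)$ may be poor, and one must check that the resulting $\delta$ can be chosen independent of $k$ while still $\delta\to0$ as $A\to\infty$. This is managed by noting that $N_k\to\infty$ as $A\to\infty$ uniformly over $k$ (since $\pi_k \le 1$), so the relevant indices $n\ge N_k$ are all large, and there the $o(1)$ is genuinely small; the finitely many small-$n$ terms are already absorbed into the crude $N_k^r$ bound and need no care. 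A secondary point is keeping the constants honest through the two elementary inequalities $n^r-(n-1)^r\le r n^{r-1}$ and $(n+1)^{r-1}\le 2^{r-1}n^{r-1}$ (valid for $r\ge1$, $n\ge1$), which together produce the stated $r2^{r-1}$.
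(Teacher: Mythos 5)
Your proposal follows essentially the same route as the paper's proof of Lemma~\ref{Lem:UpperEk}: bound $\Lambda_{k+n}$ from below by the single term indexed by $k$, so that $\{T_A>k+n\}$ forces $\lambda_{k+n}^k<\log(A/\pi_k)-|\log\Pb(\nu\ge k+n)|$; split at $N_k\approx \log(A/\pi_k)/(I+\mu-\varepsilon)$; use condition \eqref{Prior} to replace $|\log\Pb(\nu\ge k+n)|/n$ by $\mu$ up to a $\delta(A)\to0$ (exactly, with no $\delta$, in the geometric case since $\Pb(\nu\ge k+n)=(1-\rho)^{k+n}$); and sum the tail series with the constant $r2^{r-1}$. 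Your uniformity-in-$k$ discussion is also sound, since $N_k\ge \log A/(I+\mu-\varepsilon)\to\infty$ and $|\log\Pb(\nu\ge k+n)|\ge|\log\Pb(\nu\ge n)|$.

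One step is stated incorrectly, although it is easily repaired and does not change the strategy: the claimed bound $\sum_{n=1}^{N_k} r n^{r-1}\le N_k^r$ is reversed for $r>1$ (for $r=2$, $N_k=2$ it reads $6\le 4$); in fact summing $n^r-(n-1)^r\le rn^{r-1}$ gives $N_k^r\le\sum_{n\le N_k}rn^{r-1}$, and the crude estimate $\sum_{n\le N_k}rn^{r-1}\le rN_k^{r}$ would leave an extra factor $r$ on the first term, so the constant in \eqref{Ekineq} would not come out as stated. The fix is to split the sum \emph{before} applying the per-term estimate: on the block $n\le N_k$ bound the probabilities by one and telescope, $\sum_{n\le N_k}[n^r-(n-1)^r]=N_k^r\le\bigl(1+\log(A/\pi_k)/(I+\mu-\varepsilon)\bigr)^r$, and only on the tail block use $n^r-(n-1)^r\le rn^{r-1}$ together with the shift inequality (your $2^{r-1}$ factor). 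This is precisely how the paper proceeds, via the representation $\Eb_k(\tau_A^{(k)})^r=\int_0^\infty rt^{r-1}\Pb_k(\tau_A^{(k)}>t)\,\D t$ with the integral over $[0,N_A]$ bounded by $N_A^r$. With that rearrangement (and the harmless relabelling of $\varepsilon$ you already mention to absorb the sign of $\delta$), your argument coincides with the paper's.
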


\begin{proof}[Proof of Theorem~\ref{Th:FOAOgen}]
(i)  By Lemma~\ref{Lem:LB}, under the right-tail condition \eqref{Pmaxgen} the asymptotic lower bound \eqref{LBTA1} holds for all $m>0$.
Thus, to establish \eqref{MADDAOgen1} it suffices to show that, under the left-tail condition \eqref{rcompleteleftgen},
\begin{equation}\label{UppergenA}
\limsup_{A\to\infty} \frac{\Eb^\pi[(T_{A}-\nu)^r | T_{A} > \nu]}{(\log A)^r} \le \frac{1}{(I+\mu)^r} .
\end{equation}

 Let $\varepsilon_1=\varepsilon + \delta$. By Lemma~\ref{Lem:UpperEk}, for any $0<\varepsilon < I+\mu$, 
\begin{equation}\label{UpperADDA}
\begin{aligned}
\Eb^\pi[(T_{A}-\nu)^r | T_{A} > \nu] & = \frac{\sum_{k=0}^\infty \pi_k  \Eb_{k}\brcs{(T_A-k)^+}^r}{1-\PFA(T_A)} 
\\
&\le\frac{ \sum_{k=0}^\infty  \pi_k \brc{1+\frac{\log (A/\pi_k)}{I+\mu-\varepsilon}}^r + r 2^{r-1}\, \sum_{k=0}^\infty \pi_k  \Upsilon_{k,r}(\varepsilon_1)}{A/(1+A)} ,
\end{aligned}
\end{equation}
where we used the inequality $1-\PFA(T_A) \ge A/(1+A)$. By conditions \eqref{rcompleteleftgen} and ($ \Cb$),
\[
 \sum_{k=0}^\infty \pi_k  \Upsilon_{k,r}(\varepsilon_1) < \infty \quad \text{for any}~ \varepsilon_1 >0 \quad \text{and} \quad  \sum_{k=0}^\infty \pi_k |\log\pi_k|^r < \infty,
\]
which implies that
\[
\Eb^\pi[(T_{A}-\nu)^r | T_{A} > \nu] \le \brc{\frac{\log A}{I+\mu-\varepsilon}}^r (1+o(1)) \quad \text{as}~ A \to \infty.
\]
Since $\varepsilon$ can be arbitrarily small, the upper bound \eqref{UppergenA} follows and the proof of (i) is complete.

(ii) Setting $A=A_\alpha=(1-\alpha)/\alpha$ in \eqref{MADDAOgen1} yields 
\begin{equation}\label{MOapproxgenalpha}
\lim_{\alpha\to0} \frac{\Eb^\pi[(T_{A_\alpha}-\nu)^r | T_{A_\alpha} > \nu]}{|\log\alpha|^r} = \frac{1}{(I+\mu)^r},
\end{equation}
which along with the lower bound \eqref{LBinclass}  in Lemma~\ref{Lem:LB} completes the proof of \eqref{FOAOmomentsgen}. Finally, asymptotic approximations \eqref{MADDAOgen} 
follow from \eqref{MOapproxgenalpha} and \eqref{FOAOmomentsgen}.  Evidently, 
\eqref{MOapproxgenalpha} and \eqref{FOAOmomentsgen}, and therefore, approximations \eqref{MADDAOgen} also hold if threshold $A_\alpha$ is chosen so that $T_{A_\alpha}\in\class$ and
$\log A_\alpha\sim |\log \alpha|$ as $\alpha\to0$. The proof is complete.
\end{proof}

Theorem~\ref{Th:FOAOgen} implies that the Shiryaev procedure $T_A$ is asymptotically optimal whenever the LLR converges to a constant $I$ $r$-completely. Indeed, we have the following corollary.

\begin{corollary}\label{Cor: Cor 1} 
Let $r \ge 1$.  Let the prior distribution of the change point satisfy condition {\rm ($ \Cb$)}. Assume that for some $0<I<\infty$
\begin{equation}\label{rcompletegen}
\sum_{k=0}^\infty \pi_k \brcs{\sum_{n=1}^\infty n^{r-1} \Pb_k\brc{\abs{\frac{1}{n}\lambda_{k+n}^k - I} > \varepsilon}} <\infty \quad \text{for all}~ \varepsilon >0 .
\end{equation}  
Then \eqref{MADDAOgen1}, \eqref{FOAOmomentsgen} and \eqref{MADDAOgen} hold true.
\end{corollary}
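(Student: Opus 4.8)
The plan is to derive Corollary~\ref{Cor: Cor 1} from Theorem~\ref{Th:FOAOgen} by checking that the two standing hypotheses of the theorem --- the right-tail condition \eqref{Pmaxgen} and the left-tail condition \eqref{rcompleteleftgen} --- both follow from the single $r$-complete convergence assumption \eqref{rcompletegen}. Once these are in place, assertions \eqref{MADDAOgen1}, \eqref{FOAOmomentsgen} and \eqref{MADDAOgen} are immediate from the theorem, since condition (\,$\Cb$\,) on the prior is assumed directly in the corollary.

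First I would dispose of the left-tail condition \eqref{rcompleteleftgen}. This is essentially trivial: for every $k$ and every $\varepsilon>0$ we have the pointwise inequality
\[
\Pb_k\brc{\frac{1}{n}\lambda_{k+n}^k < I-\varepsilon} \le \Pb_k\brc{\abs{\frac{1}{n}\lambda_{k+n}^k - I} > \varepsilon},
\]
so $\Upsilon_{k,r}(\varepsilon) \le \sum_{n=1}^\infty n^{r-1}\Pb_k(\abs{n^{-1}\lambda_{k+n}^k-I}>\varepsilon)$, and summing against $\pi_k$ gives $\sum_k \pi_k \Upsilon_{k,r}(\varepsilon)<\infty$ directly from \eqref{rcompletegen}. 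Hence \eqref{rcompleteleftgen} holds.

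The slightly less mechanical step is the right-tail condition \eqref{Pmaxgen}, which asks that $M^{-1}\max_{1\le n\le M}\lambda_{k+n}^k$ not exceed $(1+\varepsilon)I$ with probability tending to $1$. The natural route is to show that $r$-complete convergence of $n^{-1}\lambda_{k+n}^k$ to $I$ (for a fixed $k$) forces the a.s.\ convergence $n^{-1}\lambda_{k+n}^k \to I$ under $\Pb_k$, which in turn yields $M^{-1}\max_{n\le M}\lambda_{k+n}^k \to I$ $\Pb_k$-a.s.\ by a standard Cesàro/Toeplitz argument (if $a_n\to I$ then $n^{-1}\max_{j\le n} j a_j/j$... more precisely one splits the max at an index $M_0$ beyond which $|n^{-1}\lambda_{k+n}^k - I|<\varepsilon$ and notes the contribution of the initial block is $O(1/M)$). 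The a.s.\ convergence from $r$-complete convergence is itself standard: for $r\ge 1$, $\sum_n n^{r-1}\Pb_k(\abs{n^{-1}\lambda_{k+n}^k-I}>\varepsilon)<\infty$ implies in particular $\sum_n \Pb_k(\abs{n^{-1}\lambda_{k+n}^k-I}>\varepsilon)<\infty$, whence Borel--Cantelli gives $n^{-1}\lambda_{k+n}^k\to I$ $\Pb_k$-a.s. Once we have $M^{-1}\max_{1\le n\le M}\lambda_{k+n}^k\to I$ a.s., the probability in \eqref{Pmaxgen} tends to $0$ for every $\varepsilon>0$ and every fixed $k$, which is exactly what is required --- note that \eqref{Pmaxgen} is a per-$k$ statement, so no uniformity in $k$ is needed.

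With \eqref{Pmaxgen} and \eqref{rcompleteleftgen} verified and condition (\,$\Cb$\,) assumed, all the hypotheses of Theorem~\ref{Th:FOAOgen} are met, and parts (i) and (ii) of that theorem give \eqref{MADDAOgen1}, \eqref{FOAOmomentsgen} and \eqref{MADDAOgen}. The only point requiring any care --- and the one I would expect to be the main (though still minor) obstacle --- is the passage from $r$-complete convergence to the maximal statement \eqref{Pmaxgen}; everything else is a one-line domination. In fact one could even bypass the a.s.\ route and bound the maximum directly: $\Pb_k(\max_{n\le M}\lambda_{k+n}^k \ge (1+\varepsilon)IM) \le \sum_{n=1}^M \Pb_k(\lambda_{k+n}^k \ge (1+\varepsilon)IM)$, and for $n$ comparable to $M$ each term is controlled by $\Pb_k(n^{-1}\lambda_{k+n}^k \ge (1+\varepsilon)I) \le \Pb_k(\abs{n^{-1}\lambda_{k+n}^k - I}>\varepsilon I)$, while for small $n$ one uses the crude bound together with $n^{-1}\lambda_{k+n}^k$ being $o(M)$ in probability; but the a.s.\ argument is cleaner and is the one I would present.
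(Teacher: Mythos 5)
Your proposal is correct and follows exactly the paper's route: the paper's own proof simply asserts that \eqref{rcompletegen} ``obviously'' implies both \eqref{Pmaxgen} and \eqref{rcompleteleftgen} and then invokes Theorem~\ref{Th:FOAOgen}, which is precisely your plan. Your write-up merely fills in the (correct) details the paper leaves implicit --- the pointwise domination giving \eqref{rcompleteleftgen}, and Borel--Cantelli plus the elementary maximal argument giving \eqref{Pmaxgen} for each fixed $k$.
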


 \begin{proof}
Obviously, the $r$-complete convergence condition \eqref{rcompletegen} implies both conditions \eqref{Pmaxgen} and \eqref{rcompleteleftgen}, which immediately proves  the assertion of the corollary. 
 \end{proof}
 
Theorem~\ref{Th:FOAOgen} is very general and covers, perhaps, almost all possible non-iid models as well as a large class of prior distributions. However, note that condition ($ \Cb$) does 
not include the case where $\mu$ is strictly positive, $\mu>0$, but may go to zero, $\mu\to 0$. Indeed, in this case, the sum in \eqref{Prior1} approaches infinity, and the results of 
the theorem are not applicable in general. To see this, it suffices to consider the geometric prior \eqref{Geom} with $q=0$. Then $\mu=|\log(1-\rho)|$ and
 \[
\varkappa(\rho) := \sum_{k=0}^\infty \pi_k |\log \pi_k| = |\log\rho| +(1-\rho)|\log(1-\rho)|/\rho \sim |\log \rho| \quad \text{as}~ \rho \to 0.
 \]
 For $r=1$, inequality \eqref{UpperADDA} has the form
 \[
\Eb^\pi(T_{A}-\nu | T_{A} > \nu) \le\frac{1+\frac{\log A + \varkappa(\rho)}{I+\mu-\varepsilon} +  \sum_{k=0}^\infty \rho(1-\rho)^k  \Upsilon_{k,1}(\varepsilon_1)}{A/(1+A)} .
 \]
Clearly, the upper bound \eqref{UppergenA} holds if, and only if, $\rho=\rho_A$ decays in such a way that $|\log\rho_A|=o(\log A)$. Otherwise the argument breaks down and the results are not correct.

In the next lemma, which is analogous to Lemma~\ref{Lem:LB}, we provide an asymptotic lower bound for moments of the detection delay in class $\class$ when the prior distribution  
$\pi^\alpha=\{\pi_k^\alpha\}$ of the change point may depend on the PFA constraint $\alpha$ and becomes ``flat'' when $\alpha$ vanishes. Its proof is given in the Appendix.

\begin{lemma}\label{Lem:LB2}
Let $T_A$ be the Shiryaev changepoint detection procedure defined in \eqref{ShirProc}. Let the prior distribution $\pi^\alpha=\{\pi_k^\alpha\}$ of the change point satisfy condition \eqref{Prior} with 
$\mu> 0$ such that $\mu=\mu_\alpha\to 0$ as $\alpha\to 0$.  Assume that for some $0<I<\infty$ condition \eqref{Pmaxgen} holds. 
Then, for all $m>0$, 
\begin{equation}\label{LBinclass2}
\liminf_{\alpha\to0} \frac{\inf_{ T\in\class} \Eb^{\pi^\alpha}\brcs{\brc{ T-\nu}^m| T >\nu}}{|\log \alpha|^m} \ge  \frac{1}{I^m}
\end{equation}
and
\begin{equation}\label{LBTA2}
\liminf_{A\to\infty} \frac{\Eb^{\pi^\alpha}\brcs{\brc{T_A-\nu}^m|T_A >\nu}}{(\log A)^m} \ge  \frac{1}{I^m}.
\end{equation}
\end{lemma}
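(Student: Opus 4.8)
The plan is to follow the template of the proof of Lemma~\ref{Lem:LB}, carefully tracking the effect of the $\alpha$-dependence of the prior. Fix small $\delta,\varepsilon>0$ and set $L_\alpha=\lfloor(1-\delta)\,|\log\alpha|/(I+\mu_\alpha)\rfloor$, so that $L_\alpha/|\log\alpha|\to(1-\delta)/I$ since $\mu_\alpha\to0$. For any $T\in\class$, from $\Pb_k(T>k)=\Pb_\infty(T>k)$, $1-\PFA(T)\ge1-\alpha$, and $(T-k)^m\ge L_\alpha^m$ on $\{T\ge k+L_\alpha\}$, one obtains the Markov-type bound
\[
\Eb^{\pi^\alpha}\bigl[(T-\nu)^m\,\big|\,T>\nu\bigr]\ \ge\ L_\alpha^m\Bigl(1-\frac{S_\alpha(T)}{1-\alpha}\Bigr),\qquad
S_\alpha(T):=\sum_{k\ge0}\pi_k^\alpha\,\Pb_k(k<T<k+L_\alpha)
\]
(if the atom $q^\alpha=\Pb(\nu<0)$ is nonzero it adds a term of the same form with $\nu<0$, controlled by the change-of-measure bound below with the likelihood ratio $\prod_{i=1}^n\Lc_i$). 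Since $L_\alpha^m/|\log\alpha|^m\to\bigl((1-\delta)/I\bigr)^m$ and $\delta$ is arbitrary, \eqref{LBinclass2} follows once $\sup_{T\in\class}S_\alpha(T)\to0$ as $\alpha\to0$; running the same estimate for $T=T_A$ with $\alpha$ replaced by $\PFA(T_A)\le1/(1+A)$ (so $\log(1+A)\sim\log A$) yields \eqref{LBTA2}. Hence everything reduces to showing $\sup_{T\in\class}S_\alpha(T)\to0$.

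\emph{Change of measure and the false-alarm term.} For $k<n<k+L_\alpha$ we have $\{\lambda_n^k\le(1+\varepsilon)I L_\alpha\}\in\Fc_n$ and $\D\Pb_k^n/\D\Pb_\infty^n=e^{\lambda_n^k}$, so $\Pb_k(T=n,\,\lambda_n^k\le(1+\varepsilon)I L_\alpha)\le e^{(1+\varepsilon)I L_\alpha}\Pb_\infty(T=n)$; summing over $n$ and using condition \eqref{Pmaxgen} with $M=L_\alpha$ on the complementary event gives
\[
\Pb_k(k<T<k+L_\alpha)\ \le\ e^{(1+\varepsilon)I L_\alpha}\,\Pb_\infty(k<T<k+L_\alpha)\ +\ \beta_k(L_\alpha),\qquad
\beta_k(L):=\Pb_k\Bigl(\tfrac{1}{L}\max_{1\le n\le L}\lambda_{k+n}^k\ge(1+\varepsilon)I\Bigr).
\]
Summing the first term against $\pi_k^\alpha$, a reindexing in $k$ combined with condition \eqref{Prior} and the false-alarm identity $\PFA(T)=\sum_{k\ge0}\pi_k^\alpha\,\Pb_\infty(T\le k)\le\alpha$ — exactly as in the proof of Lemma~\ref{Lem:LB}, but now with $\alpha$-dependent constants in \eqref{Prior} — gives $\sum_{k\ge0}\pi_k^\alpha\,\Pb_\infty(k<T<k+L_\alpha)\le e^{(\mu_\alpha+\varepsilon')L_\alpha}\alpha$ for every $\varepsilon'>0$ and all small $\alpha$. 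Multiplying by $e^{(1+\varepsilon)I L_\alpha}$, the exponent of the resulting bound is $[(1+\varepsilon)I+\mu_\alpha+\varepsilon']L_\alpha-|\log\alpha|\le\bigl[(1-\delta)(1+\varepsilon+\varepsilon'/I)-1+o(1)\bigr]|\log\alpha|\to-\infty$ once $\varepsilon,\varepsilon'$ are small relative to $\delta$; so this part of $S_\alpha(T)$ vanishes uniformly in $T\in\class$. Note that choosing $L_\alpha$ with denominator $I+\mu_\alpha$ (rather than $I$) is what makes this work for an arbitrary rate $\mu_\alpha\to0$.

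\emph{The term from \eqref{Pmaxgen}, and the main obstacle.} It remains to show $\sum_{k\ge0}\pi_k^\alpha\,\beta_k(L_\alpha)\to0$, and this is the delicate step. For a fixed prior it is immediate by dominated convergence, since $\beta_k(L)\le1$, $\beta_k(L)\to0$ as $L\to\infty$ for each $k$ by \eqref{Pmaxgen}, and $\sum_k\pi_k\le1$ — this is how the analogous term is handled in Lemma~\ref{Lem:LB}. Here, however, the prior flattens and its mass escapes to infinity, so dominated convergence is unavailable. Instead I would introduce a truncation level $K_\alpha\to\infty$ and write $\sum_{k\ge0}\pi_k^\alpha\beta_k(L_\alpha)\le\max_{k\le K_\alpha}\beta_k(L_\alpha)+\bigl(1-\Pi_{K_\alpha}^\alpha\bigr)$, choosing $K_\alpha$ to grow slowly enough that $\max_{k\le K_\alpha}\beta_k(L_\alpha)\to0$ (possible because for each fixed $K$, $\max_{k\le K}\beta_k(L_\alpha)\to0$ as $\alpha\to0$) yet fast enough that $1-\Pi_{K_\alpha}^\alpha\to0$, which is possible by \eqref{Prior} precisely because $\mu_\alpha>0$ for every $\alpha$. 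Balancing these two requirements — and checking that the convergence in \eqref{Pmaxgen} is regular enough in $k$ for the models at hand — is the crux of the proof, and is exactly where the hypothesis ``$\mu_\alpha>0$ for each $\alpha$'' (rather than merely $\mu_\alpha\to0$) is genuinely used. Combining this with the previous paragraph gives $\sup_{T\in\class}S_\alpha(T)\to0$, and hence \eqref{LBinclass2} and \eqref{LBTA2}.
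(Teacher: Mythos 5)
Your overall template (Chebyshev bound, change of measure on $\{k<T<k+L_\alpha\}$, splitting into a $\Pb_\infty$-term and the term $\beta_k$ controlled by \eqref{Pmaxgen}) is the right one, but two steps do not hold up. First, the claimed bound $\sum_{k\ge0}\pi_k^\alpha\,\Pb_\infty(k<T<k+L_\alpha)\le e^{(\mu_\alpha+\varepsilon')L_\alpha}\alpha$ obtained ``by reindexing'' is not a consequence of condition \eqref{Prior}: that condition only fixes the asymptotics of $|\log(1-\Pi_n)|/n$ and gives no control, uniform in $n$, of ratios such as $\Pb(\nu\ge n-L_\alpha)/\Pb(\nu\ge n)$ (equivalently of $\pi^\alpha_{k-L_\alpha}/\pi^\alpha_k$); the $o(n)$ slack in the log-tail can exceed $\varepsilon' L_\alpha$ when $n\gg L_\alpha$, so the per-term comparison with $\PFA(T)\le\alpha$ fails for large $k$. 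This step is also not ``exactly as in the proof of Lemma~\ref{Lem:LB}'': there the change-of-measure bound $U^k_{\alpha,\varepsilon}(T)\le \alpha\, e^{(1+\varepsilon)IN_{\alpha,\varepsilon}}/\Pb(\nu>k+N_{\alpha,\varepsilon})$ is used only for $k\le K_{\alpha,\varepsilon}$, where the factor $e^{(\mu+\delta)k}$ is under control, and all larger $k$ are absorbed into the prior-tail term $\Pb(\nu>K_{\alpha,\varepsilon})$ (see \eqref{ProbUpper1}); no global summation over all $k$ is ever attempted.

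Second, the step you yourself call the crux is left unresolved, and your proposed resolution cannot work as stated: you want $K_\alpha\to\infty$ ``slowly enough'' that $\max_{k\le K_\alpha}\beta_k(L_\alpha)\to0$, yet the tail requirement $1-\Pi^\alpha_{K_\alpha}\to0$ forces $\mu_\alpha K_\alpha\to\infty$, i.e. $K_\alpha$ must grow at least like $1/\mu_\alpha$ --- a rate dictated by the given prior, not one you are free to choose by a diagonal argument --- so the two requirements need not be compatible, and you end by conceding that extra regularity in $k$ of \eqref{Pmaxgen} may be needed, i.e. the lemma is not proved under its stated hypotheses. The paper's proof avoids both difficulties by one explicit choice: with $N_{\alpha,\varepsilon}=(1-\varepsilon)|\log\alpha|/(I+\mu_\alpha+\delta)$ it takes $K_{\alpha,\varepsilon}=\varepsilon_1\varepsilon^2|\log\alpha|/(\mu_\alpha+\delta_\alpha)$, $0<\varepsilon_1<1$. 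Then for $k\le K_{\alpha,\varepsilon}$ the change-of-measure term is at most $\exp\set{-\varepsilon^2|\log\alpha|+(\mu_\alpha+\delta_\alpha)K_{\alpha,\varepsilon}}=\exp\set{-(1-\varepsilon_1)\varepsilon^2|\log\alpha|}\to0$, the prior tail obeys $-\log\Pb(\nu>K_{\alpha,\varepsilon})\sim\mu_\alpha K_{\alpha,\varepsilon}\sim\varepsilon_1\varepsilon^2|\log\alpha|\to\infty$, and what remains is the weighted sum $\sum_{k\le K_{\alpha,\varepsilon}}\pi^\alpha_k\beta^k_{\alpha,\varepsilon}$ --- not the maximum over the growing range, which is a strictly stronger requirement --- and this is handled via \eqref{Pmaxgen}. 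The missing idea in your proposal is precisely this choice $K_\alpha\propto|\log\alpha|/\mu_\alpha$, which simultaneously kills the prior tail and the change-of-measure term and makes your global reindexing bound unnecessary; the bound \eqref{LBTA2} then follows by the same argument with $\alpha$ replaced by $1/A$, as you indicate.
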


Using this lemma, we now establish asymptotic optimality of the Shiryaev procedure in the case where $\mu=\mu_\alpha$ approaches zero as $\alpha\to0$.

\begin{theorem}\label{Th:FOAOgen2}
Let $r\ge 1$.  Suppose that the prior distribution $\pi^\alpha=\{\pi_k^\alpha\}$ of the change point $\nu$ satisfies condition \eqref{Prior} with $\mu=\mu_\alpha\to 0$ as $\alpha\to 0$ and that $\mu_\alpha$
approaches zero at such rate that 
\begin{equation}\label{Prior3}
\lim_{\alpha\to 0} \frac{{\sum_{k=0}^\infty |\log \pi_k^\alpha|^r \pi_k^\alpha}}{|\log \alpha|^r} = 0.
\end{equation} 
Assume that for some $0<I<\infty$ condition \eqref{Pmaxgen} and the following uniform $r$-complete convergence 
\begin{equation}\label{uniformrcomplete}
\sup_{0\le k<\infty} \Upsilon_{k,r}(\varepsilon) <\infty
\end{equation}
are satisfied.  If $A=A_\alpha$ is so selected that $\PFA(T_{A_\alpha}) \le \alpha$ and 
$\log A_\alpha\sim |\log\alpha|$ as $\alpha\to0$, 
in particular $A_\alpha=(1-\alpha)/\alpha$, then the Shiryaev procedure $T_{A_\alpha}$, given by \eqref{ShirProc}, is asymptotically optimal as $\alpha\to0$ in class $\class$, 
minimizing moments of the detection delay up to order $r$: for all $0<m \le r$
\begin{equation} \label{MADDAOgen2}
\inf_{T \in \class} \Eb^{\pi^\alpha}[(T-\nu)^m | T> \nu] \sim  \Eb^{\pi^\alpha}[(T_{A_\alpha}-\nu)^m | T_{A _\alpha}> \nu] \sim \brc{\frac{|\log\alpha|}{I}}^m \quad \text{as}~ \alpha \to 0.
\end{equation}
\end{theorem}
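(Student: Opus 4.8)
The plan is to run the same two-sided argument as in the proof of Theorem~\ref{Th:FOAOgen}, but with Lemma~\ref{Lem:LB2} replacing Lemma~\ref{Lem:LB} on the lower-bound side and with Lemma~\ref{Lem:UpperEk}, applied to the moving prior $\pi^\alpha$ and its decay rate $\mu_\alpha>0$, on the upper-bound side; conditions \eqref{Prior3} and \eqref{uniformrcomplete} are precisely what is needed to absorb the two resulting error terms. I would begin with two reductions. First, every $\pi^\alpha$ satisfies condition~($\Cb$) in the case $\mu=\mu_\alpha>0$ (where \eqref{Prior1} is automatic), so Lemma~\ref{Lem:UpperEk} is legitimately available for each fixed $\alpha$. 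Second, it suffices to prove \eqref{MADDAOgen2} for $m=r$: since $x\mapsto x^{r/m}$ is convex, Jensen's inequality for the conditional law $\Eb^{\pi^\alpha}[\,\cdot\mid T_{A_\alpha}>\nu]$ gives $\Eb^{\pi^\alpha}[(T_{A_\alpha}-\nu)^m\mid T_{A_\alpha}>\nu]\le\brc{\Eb^{\pi^\alpha}[(T_{A_\alpha}-\nu)^r\mid T_{A_\alpha}>\nu]}^{m/r}$, while the matching lower bounds $\inf_{T\in\class}\Eb^{\pi^\alpha}[(T-\nu)^m\mid T>\nu]\ge(|\log\alpha|/I)^m(1+o(1))$ for every $m>0$ are already furnished by \eqref{LBinclass2}.

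Next I would establish the upper bound at $m=r$. Fix $\varepsilon\in(0,I)$; for $\alpha$ small the threshold $A_\alpha$ is large, and Lemma~\ref{Lem:UpperEk} applied to $\pi^\alpha$ yields, for all $k\in\Zbb_+$,
\[
\Eb_k[\brc{T_{A_\alpha}-k}^+]^r\le\brc{1+\frac{\log(A_\alpha/\pi_k^\alpha)}{I+\mu_\alpha-\varepsilon}}^r+r2^{r-1}\,\Upsilon_{k,r}(\varepsilon+\delta_\alpha)\le\brc{1+\frac{\log A_\alpha+\abs{\log\pi_k^\alpha}}{I-\varepsilon}}^r+r2^{r-1}\,\Upsilon_{k,r}(\varepsilon+\delta_\alpha),
\]
where $\delta_\alpha:=\delta_{A_\alpha}\to0$ and the last step uses only $\mu_\alpha\ge0$. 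Averaging against $\pi^\alpha$ and dividing by $1-\PFA(T_{A_\alpha})\ge1-\alpha$, as in \eqref{UpperADDA}, bounds $\Eb^{\pi^\alpha}[(T_{A_\alpha}-\nu)^r\mid T_{A_\alpha}>\nu]$ above by $(1-\alpha)^{-1}$ times $\brcs{\sum_{k=0}^\infty\pi_k^\alpha\brc{1+\frac{\log A_\alpha+\abs{\log\pi_k^\alpha}}{I-\varepsilon}}^r+r2^{r-1}\sup_{0\le k<\infty}\Upsilon_{k,r}(\varepsilon+\delta_\alpha)}$. The supremum term is bounded by a constant independent of $\alpha$ by \eqref{uniformrcomplete}, hence is $o(|\log\alpha|^r)$. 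For the sum I would use the convexity estimate $(1+u+v)^r\le(1-\theta)^{-(r-1)}(1+u)^r+\theta^{-(r-1)}v^r$ for fixed $\theta\in(0,1)$, with $u=\log A_\alpha/(I-\varepsilon)$ and $v=|\log\pi_k^\alpha|/(I-\varepsilon)$, getting the bound $(1-\theta)^{-(r-1)}(1+\log A_\alpha/(I-\varepsilon))^r+\theta^{-(r-1)}(I-\varepsilon)^{-r}\sum_{k=0}^\infty\pi_k^\alpha|\log\pi_k^\alpha|^r$; since $\log A_\alpha\sim|\log\alpha|$ the first piece is $(1-\theta)^{-(r-1)}(|\log\alpha|/(I-\varepsilon))^r(1+o(1))$ and the second is $o(|\log\alpha|^r)$ by \eqref{Prior3}. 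Hence $\limsup_{\alpha\to0}\Eb^{\pi^\alpha}[(T_{A_\alpha}-\nu)^r\mid T_{A_\alpha}>\nu]/|\log\alpha|^r\le(1-\theta)^{-(r-1)}(I-\varepsilon)^{-r}$, and letting $\theta\downarrow0$ and then $\varepsilon\downarrow0$ gives $\le I^{-r}$.

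Combining this upper bound with the lower bounds \eqref{LBTA2} and \eqref{LBinclass2} would give $\Eb^{\pi^\alpha}[(T_{A_\alpha}-\nu)^r\mid T_{A_\alpha}>\nu]\sim(|\log\alpha|/I)^r$ and $\inf_{T\in\class}\Eb^{\pi^\alpha}[(T-\nu)^r\mid T>\nu]\sim(|\log\alpha|/I)^r$; inserting this into the Jensen reduction of the first paragraph then propagates the asymptotics to all $0<m\le r$, which is \eqref{MADDAOgen2}. Since the only features of $A_\alpha$ used are $\PFA(T_{A_\alpha})\le\alpha$ (for the denominator) and $\log A_\alpha\sim|\log\alpha|$, the case of a general such $A_\alpha$, and in particular $A_\alpha=(1-\alpha)/\alpha$, is covered at the same time. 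The step I expect to be the main obstacle is making the application of Lemma~\ref{Lem:UpperEk} legitimate uniformly in $\alpha$: the ``sufficiently large $A$'' threshold and the vanishing error $\delta_\alpha$ in \eqref{Ekineq} are, a priori, prior-dependent, whereas here the prior moves with $\alpha$. Inspecting the proof of that lemma, $\mu_\alpha$ enters only through the combination $I+\mu_\alpha$, which I bound below by $I$ — the favourable direction — so shrinking $\mu_\alpha$ loses nothing; what must be verified is that $\delta_\alpha$, controlled by the convergence rate in \eqref{Prior} for $\pi^\alpha$ together with $\log A_\alpha\to\infty$, still tends to $0$ along the family $\{\pi^\alpha\}$. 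This is exactly the reason Lemma~\ref{Lem:LB2} was stated and proved separately from Lemma~\ref{Lem:LB}, and the analogous care is needed here on the upper-bound side.
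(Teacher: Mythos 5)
Your proposal follows essentially the same route as the paper's proof: the lower bound is taken from Lemma~\ref{Lem:LB2}, and the upper bound is obtained by substituting $A_\alpha$ and the moving prior $\pi^\alpha$ into \eqref{UpperADDA} (i.e., Lemma~\ref{Lem:UpperEk}) and invoking \eqref{Prior3}, \eqref{uniformrcomplete} and $\mu_\alpha\to0$; your convexity split of $\bigl(1+u+v\bigr)^r$ and the Jensen reduction from $m$ to $r$ only make explicit steps the paper leaves implicit. The single point you flag as unresolved — whether $\delta_{A_\alpha}$ vanishes along the moving family of priors — can be dispatched by dropping the nonnegative term $|\log\Pb(\nu\ge k+n)|$ in the definition of $\tau_A^{(k)}$ and taking $N_A=1+\lfloor\log(A/\pi_k^\alpha)/(I-\varepsilon)\rfloor$, which gives \eqref{Ekineq} with $I+\mu-\varepsilon$ replaced by $I-\varepsilon$ and no $\delta$ at all, at no cost here since the limiting constant is $1/I$.
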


\begin{proof}
Substituting $A=(1-\alpha)/\alpha$ (or $\log A_\alpha \sim |\log \alpha|$) in inequality \eqref{UpperADDA}, we obtain
\[
\Eb^{\pi^\alpha}[(T_{A}-\nu)^r | T_{A}  > \nu] 
\le\frac{ \sum_{k=0}^\infty  \pi_k^\alpha \brc{1+\frac{\log ((1-\alpha)/\alpha\pi_k^\alpha)}{I+\mu_\alpha-\varepsilon}}^r + r 2^{r-1}\, \sup_{k\ge 0}\Upsilon_{k,r}(\varepsilon_1)}{1-\alpha} .
\]
Using conditions \eqref{Prior3} and \eqref{uniformrcomplete} and taking into account that $\mu_\alpha\to0$ as $\alpha\to0$ yields
\[
\Eb^{\pi^\alpha}[(T_{A_\alpha}-\nu)^m | T_{A _\alpha}> \nu]  \le  \brc{\frac{|\log\alpha|}{I-\varepsilon}}^m (1+o(1)) \quad \text{as}~ \alpha \to 0.
\]
Since $\varepsilon$ is an arbitrary number in $(0,I)$,  we obtain the asymptotic upper bound 
\[
\Eb^{\pi^\alpha}[(T_{A_\alpha}-\nu)^m | T_{A _\alpha}> \nu]  \le  \brc{\frac{|\log\alpha|}{I}}^m (1+o(1)) \quad \text{as}~ \alpha \to 0,
\]
which along with the lower bound \eqref{LBinclass2} in Lemma~\ref{Lem:LB2}  proves \eqref{MADDAOgen2}. 
\end{proof}

 \begin{remark}
If the prior distribution is geometric \eqref{Geom}, then Theorem~\ref{Th:FOAOgen2} holds whenever the parameter $\rho=\rho_\alpha\to0$ at a rate $|\log\rho_\alpha|=o(|\log\alpha|)$. Indeed, see the upper bound 
\eqref{EkineqGeom} in Lemma~\ref{Lem:UpperEk}.
\end{remark}

 \subsection{The case of independent observations}\label{ssec:Indepcase}

The results of the previous subsection show that the lower bounds \eqref{LBinclass} and \eqref{LBTA1} for moments of the detection delay hold whenever the LLR process $\lambda_{k+n}^k$ 
obeys the SLLN \eqref{SLLN}, since in this case condition \eqref{Pmaxgen} is satisfied. However, in general, an almost sure convergence  \eqref{SLLN} is not sufficient for obtaining the upper bounds, and therefore, 
for asymptotic optimality of the Shiryaev procedure. In fact, this condition does not even guarantee finiteness of the average delay to detection $\Eb^\pi(T_A-\nu|T_A>\nu)$, and to obtain
meaningful results we need to strengthen the SLLN into the $r$-complete version.  On the other hand, in the iid case,
where conditioned on $\nu=k$ the observations $X_1,\dots,X_k$ are iid with pre-change density $f_\infty(x)$ and $X_{k+1}, X_{k+2},\dots$ are iid with post-change density $f_0(x)$, 
the situation is dramatically different. By Theorem~4 of \cite{TV2005}, the Shiryaev procedure asymptotically (as $\alpha\to0$) minimizes all positive moments of the detection delay in 
class $\class$ if the prior distribution is geometric and the Kullback--Leibler information number
\begin{equation}\label{KL}
\mc{K} = \Eb_0 \lambda_1^0 = \int \log \brc{\frac{f_0(x)}{f_\infty(x)}}\, \D \mu(x)
\end{equation}
is positive and finite.
 
We now extend this result to the case where observations are independent, but not necessarily identically distributed, i.e., 
$p_\infty(X_i|\Xb^{i-1})=f_{\infty, i}(X_i)$ and $p_0(X_i|\Xb^{i-1})=f_{0, i}(X_i)$ in \eqref{noniidmodel}. More generally, we may assume that the increments $Z_i$ of the LLR 
$\lambda_n^k=\sum_{i=k+1}^n Z_i$ are independent, which is always the case if the observations are independent. 
This slight generalization is important for certain examples with dependent observations that lead to the LLR with independent increments. 
See, e.g., Example~\ref{Ex1} in Section~\ref{sec:Examples}.

\begin{theorem}\label{Th:FOAOindep}
Let $T_A$ be the Shiryaev changepoint detection procedure defined in \eqref{ShirProc}.  Let $r\ge 1$. Assume that the LLR process $\{\lambda_{k+n}^k\}_{n\ge 1}$ has independent, 
not necessarily identically distributed increments under $\Pb_k$, $k\in\Zbb_+$.  Suppose that condition \eqref{Pmaxgen} holds and the following condition
\begin{equation}\label{ProbLeftgen}
\lim_{n\to\infty} \Pb_k\brc{\frac{1}{n}\lambda_{\ell+n}^\ell < I-\varepsilon} =0 \quad \text{for all}~ \varepsilon >0, ~ \text{all}~ \ell \ge k~ \text{and all}~ k \in \Zbb_+
\end{equation}  
is satisfied.
 
\noindent {\rm \bf (i)} Let the prior distribution of the change point be geometric \eqref{Geom} with $q=0$.  Then relations \eqref{MADDAOgen1}, \eqref{FOAOmomentsgen} and \eqref{MADDAOgen} 
hold true for all $m >0$ with $\mu=|\log(1-\rho)|$. Therefore, the Shiryaev procedure $T_{A_\alpha}$ minimizes asymptotically 
as $\alpha\to0$ all positive moments of the detection delay in class $\class$.

\noindent {\rm \bf (ii)} Let the prior distribution be geometric with the parameter $\rho=\rho_\alpha$ that depends on $\alpha$ and goes to zero as $\alpha\to 0$ at such rate that
\begin{equation}\label{rate}
\lim_{\alpha\to0} \frac{\log \rho_\alpha}{\log \alpha} = 0.
\end{equation}
Then relations \eqref{MADDAOgen2} hold for all $m>0$, i.e., the Shiryaev procedure is asymptotically optimal with respect to all positive moments of the detection delay.
\end{theorem}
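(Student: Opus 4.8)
The plan is to split the argument into the (already available) asymptotic lower bound and a matching upper bound on the moments of the delay of $T_{A_\alpha}$, exploiting the two structural features in force here: the geometric prior and the independence of the LLR increments. For the lower bounds nothing new is needed. Condition \eqref{Pmaxgen} is assumed, so in case (i) Lemma~\ref{Lem:LB} gives $\liminf_{\alpha\to0}\inf_{T\in\class}\Eb^{\pi}[(T-\nu)^m\mid T>\nu]/|\log\alpha|^m\ge(I+\mu)^{-m}$ with $\mu=|\log(1-\rho)|$, and in case (ii), where $\mu=\mu_\alpha\to0$, Lemma~\ref{Lem:LB2} gives the same bound with $(I+\mu_\alpha)^{-m}$ replaced by $I^{-m}$. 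Since $\PFA(T_{A_\alpha})\le1/(1+A_\alpha)\le\alpha$, it remains only to prove the companion upper bounds $\Eb^{\pi}[(T_{A_\alpha}-\nu)^m\mid T_{A_\alpha}>\nu]\le(|\log\alpha|/(I+\mu))^m(1+o(1))$ in case (i) and $\le(|\log\alpha|/I)^m(1+o(1))$ in case (ii), and for \emph{every} $m>0$.

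For the upper bound I would start from the recursion $\Lambda_n=(\rho+\Lambda_{n-1})\Lc_n/(1-\rho)$ implied by \eqref{LambdaGeom} when $q=0$; dropping $\Lambda_{n-1}\ge0$ at the first step and iterating yields the deterministic lower bound $\Lambda_{k+n}\ge\rho\prod_{i=k+1}^{k+n}\Lc_i/(1-\rho)=\rho\exp\{\lambda_{k+n}^k+n\mu\}$ with $\mu=|\log(1-\rho)|$. Consequently, on $\{\nu=k\}$ one has $T_A-k\le\inf\{n\ge1:\lambda_{k+n}^k+n\mu\ge\log(A/\rho)\}$, and since $n^{-1}\lambda_{k+n}^k\to I$ in $\Pb_k$-probability (the left tail being \eqref{ProbLeftgen} with $\ell=k$, the right tail following from \eqref{Pmaxgen}), this already pins down the first-order behaviour $T_A-k\approx\log(A/\rho)/(I+\mu)$ in probability; the work is to promote this to convergence of all moments.

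The key is two complementary bounds for $p_k(n):=\Pb_k(T_A-k>n)\le\Pb_k(\Lambda_{k+n}<A)$. First, the likelihood-ratio martingale identity $\Eb_k[e^{-\lambda_{k+n}^k}]=1$, together with the display above and Markov's inequality, gives the uniform geometric bound $p_k(n)\le(A/\rho)\,e^{-n\mu}$; this alone makes every moment of $(T_A-k)^+$ finite and $O((\log A)^m)$, uniformly in $k$. Second, for $n\ge(1+\delta)\log(A/\rho)/(I+\mu)$ one has $\log(A/\rho)/n-\mu\le I-\varepsilon_\delta$ for some $\varepsilon_\delta>0$, hence $p_k(n)\le\Pb_k(n^{-1}\lambda_{k+n}^k<I-\varepsilon_\delta)$, which tends to $0$ by \eqref{ProbLeftgen}. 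Splitting $\Eb_k[(T_A-k)^+]^m=\int_0^\infty mt^{m-1}p_k(t)\,\D t$ at $t_1:=(1+\delta)\log(A/\rho)/(I+\mu)$ and $t_2:=C\log A$, the first piece is $\le t_1^m$, the middle piece is $\le t_2^m\sup_{t\ge t_1}p_k(t)=o((\log A)^m)$, and in case (i), where $\mu$ is a fixed positive constant, the tail piece is $\le(A/\rho)\int_{t_2}^\infty mt^{m-1}e^{-t\mu}\,\D t=o(1)$ once $C\mu>1$. Letting $\delta\downarrow0$, averaging over $k$ against the geometric weights $\pi_k$ (legitimate by dominated convergence, since after factoring out $(\log A)^m$ the middle and tail remainders are bounded and tend to $0$ for each $k$), and setting $A=A_\alpha$ completes case (i). In case (ii) the ``free'' geometric tail degrades because $\mu_\alpha\to0$, so the tail piece must instead be bounded by a blocking/independence argument: partitioning the post-change clock into disjoint blocks of length $L_\alpha\sim(1+\delta)|\log\alpha|/I$ and observing that $\{T_A-k>NL_\alpha\}$ forces every block sum to remain below $\log(A_\alpha/\rho_\alpha)$, independence of the increments gives $p_k(NL_\alpha)\le\prod_{i=0}^{N-1}\Pb_k\bl(L_\alpha^{-1}\lambda_{k+(i+1)L_\alpha}^{k+iL_\alpha}<I-\varepsilon_\delta\br)$, and each factor is made $\le p<1$ for small $\alpha$ via \eqref{ProbLeftgen} for the block starting point $\ell=k+iL_\alpha$, sharpened through $\lambda_{k+(i+1)L_\alpha}^{k+iL_\alpha}=\lambda_{k+(i+1)L_\alpha}^{k}-\lambda_{k+iL_\alpha}^{k}$ and \eqref{Pmaxgen}. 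Condition \eqref{rate} enters only to ensure $\log(A_\alpha/\rho_\alpha)\sim|\log\alpha|$ and $\mu_\alpha\to0$, so that $t_1^m\sim(|\log\alpha|/I)^m$.

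The step I expect to be the main obstacle is precisely this tail control in the ``far'' regime $t\gg\log A$: one needs a bound on $p_k(t)$ that decays fast enough in $t$ for \emph{all} moments to be finite (and the far tail to contribute $o((\log A)^m)$), yet is tight enough not to spoil the leading constant $(I+\mu)^{-m}$, and all of this starting only from the weak in-probability hypotheses \eqref{Pmaxgen}--\eqref{ProbLeftgen}. In case (i) the martingale identity $\Eb_k[e^{-\lambda_{k+n}^k}]=1$ does this at no cost (which is exactly why the geometric prior with $\mu>0$ is so convenient), but in case (ii) it has to be done by hand through the block argument, whose delicate point is obtaining $\Pb_k(L_\alpha^{-1}\lambda_{k+(i+1)L_\alpha}^{k+iL_\alpha}<I-\varepsilon)\le p<1$ uniformly over the blocks $i$ — this is where both the full strength of \eqref{ProbLeftgen} (all $\ell\ge k$) and the independence of the increments are genuinely used.
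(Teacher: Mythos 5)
Your argument is sound, and it is worth noting where it coincides with and where it departs from the paper's proof. For part (ii) you take essentially the paper's route: blocks of length $\sim(1+\delta)|\log\alpha|/I$, the observation that $\{T_A-k>NL_\alpha\}$ forces each block LLR below $\log(A_\alpha/\rho_\alpha)$ (the paper uses the slightly sharper threshold $\log(A/\rho)-N_A|\log(1-\rho)|$, which is immaterial), and the product bound from independence of the increments; the uniformity of the per-block bound over block starting points, which you flag as the delicate spot, is asserted in the paper at exactly the same level of detail, by reading \eqref{ProbLeftgen} over all $\ell\ge k$. For part (i), however, the paper runs the very same blocking argument with $N_A=1+\lfloor\log(A/\rho)/(I+\mu-\varepsilon)\rfloor$, whereas you replace it by the elementary bound $\Pb_k(T_A-k>n)\le\Pb_k\brc{\lambda_{k+n}^k<\log(A/\rho)-n\mu}\le (A/\rho)e^{-n\mu}$ obtained from $\Lambda_{k+n}\ge\rho e^{\lambda_{k+n}^k+n\mu}$ and $\Eb_k e^{-\lambda_{k+n}^k}\le1$, using \eqref{ProbLeftgen} only with $\ell=k$ in the intermediate range. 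This is a genuinely different and arguably cleaner route: it does not use independence of the LLR increments at all when $\rho$ is fixed, and it avoids any uniformity-in-$\ell$ issue in part (i); its price is that it collapses when $\mu=\mu_\alpha\to0$, which is precisely why you must fall back on the paper's blocking argument in part (ii), while the paper's single argument covers both parts by substituting $\rho=\rho_\alpha$ into the same moment inequality.

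One point in your part (ii) needs tightening. You bound each block factor by a fixed $p<1$; with that alone the moment bound reads $L_\alpha^m\bigl(1+m2^{m-1}\sum_{N\ge1}N^{m-1}p^N\bigr)$, and the correction term is a fixed positive constant, so the leading constant would be inflated above $(|\log\alpha|/I)^m$ and the exact first-order optimality \eqref{MADDAOgen2} would not follow. You need the per-block probability to tend to zero as $\alpha\to0$ (the paper's $\delta_A$, giving $L_{r,A}=\sum_\ell \ell^{r-1}\delta_A^\ell\to0$), so that the correction is $o(1)$ relative to $L_\alpha^m$. This is delivered by the same condition \eqref{ProbLeftgen} you already invoke (interpreted uniformly over the block starting points, as in the paper), so it is a one-line fix rather than a structural flaw, but as written your bound loses the sharp constant.
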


The idea of relaxing the $r$-complete convergence condition by condition \eqref{ProbLeftgen} is based on splitting integration, when obtaining the upper bound for the expectation $\Eb_k[(T_A-k)^+]^r$,
 into a sequence of intervals (cycles) of the size $N_A\approx \log A /(I+\mu)$ and then showing that $\Pb_k (T_A-k > \ell N_A) \le \delta^\ell$, $\ell=1,2,\dots$ 
 for some small $\delta$ under condition \eqref{ProbLeftgen},  using independence of the LLR increments. The details are given below.

\begin{proof}
(i) Hereafter $\lfloor x\rfloor$ denotes the largest integer less or equal to $x$.  Let $N_A=1+\lfloor \log (A/\rho) /(I+\mu-\varepsilon) \rfloor$, where $\mu=|\log(1-\rho)|$.  
We need only to prove that the upper bound \eqref{UppergenA} holds under condition \eqref{ProbLeftgen}. To this end, note that we have the following chain of equalities and inequalities:
\begin{align} \label{ExpkTAplus}
\Eb_{k}\brcs{(T_A-k)^+}^r  & = \sum_{\ell=0}^{\infty} \int_{\ell N_A}^{(\ell+1) N_A} r t^{r-1}  \Pb_k (T_A-k > t) \, \D t    \nonumber
\\
& \le N_A^r + \sum_{\ell =1}^{\infty} \int_{\ell N_A}^{(\ell +1) N_A} r t^{r-1}  \Pb_k (T_A-k > t) \, \D t  \nonumber
\\
& \le  N_A^r + \sum_{\ell=1}^{\infty} \int_{\ell N_A}^{(\ell+1) N_A} r t^{r-1}  \Pb_k (T_A-k > \ell N_A) \, \D t  \nonumber
\\
& = N_A^r\brc{1 + \sum_{\ell=1}^{\infty} [(\ell+1)^r-\ell^r]  \Pb_k (T_A-k > \ell N_A )} \nonumber
\\
 & \le  N_A^{r} \brc{1+\sum_{\ell=1}^{\infty}   r (\ell+1)^{r-1}   \Pb_k (T_A -k>  \ell N_A )} \nonumber
 \\
 & \le  N_A^{r} \brc{1+ r 2^{r-1} \sum_{\ell=1}^{\infty}  \ell^{r-1}   \Pb_k (T_A -k>  \ell N_A )}.  
\end{align}
Introduce the following notation: $A_\rho=A/\rho$, $a_\rho=\log A_\rho - N_A \, |\log(1-\rho)|$, 
\[
R_{n,\rho} = \sum_{m=0}^{n-1} (1-\rho)^{m-n} \exp\set{\lambda_n^m}, \quad R_{n,\rho}^{j} = \sum_{m=j}^{n-1} (1-\rho)^{m-n} \exp\set{\lambda_n^m}, ~~ n > j, ~j=0,1,\dots.
\]
Note that $R_{n,\rho}= \Lambda_n/\rho$ (see \eqref{LambdaGeom}). Since $R_{n,\rho} \ge R_{n,\rho}^{j}\ge (1-\rho)^{j-n}\exp\set{\lambda_n^j}$ (for any $n>j$) and the increments of $\lambda_n^j$ 
are independent, we obtain
\begin{equation}\label{Needit}
\begin{aligned}
\Pb_k \brc{T_A-k > \ell N_A} & =\Pb_k\brc{R_{n,\rho} < A_\rho ~\text{for}~ n=1,\dots,k+\ell N_A}
\\
& \le  \Pb_k\brc{R_{k+n N_A,\rho} < A_\rho ~\text{for}~ n=1,\dots,\ell}
\\
& \le  \Pb_k\brc{R_{k+n N_A,\rho}^{k+(n-1)N_A+1} <A_\rho ~\text{for}~ n=1,\dots,\ell}
\\
& \le \Pb_k\brc{\exp\set{\lambda_{k+ n N_A}^{k+(n-1) N_A+1}} <A_\rho (1-\rho)^{N_A} ~\text{for}~ n=1,\dots,\ell}
\\
& =  \Pb_k\brc{\lambda_{k+N_A}^k< a_\rho, \lambda_{k+2 N_A}^{k+N_A+1}< a_\rho,\dots, \lambda_{k+\ell N_A}^{k+(\ell-1)N_A+1} < a_\rho}
\\
&= \prod_{n=1}^\ell \Pb_k\brc{\lambda_{k+n N_A}^{k+(n-1)N_A+1}< a_\rho} \le \prod_{n=1}^\ell \Pb_k\brc{\frac{1}{N_A}\lambda_{k+n N_A}^{k+(n-1)N_A+1} <I-\varepsilon}  ,
\end{aligned}
\end{equation}
where the last inequality follows from the inequality
\[
\begin{aligned}
\Pb_k\brc{\lambda_{k+n N_A}^{k+(n-1)N_A+1}< a_\rho} &= \Pb_k\brc{\frac{\lambda_{k+n N_A}^{k+(n-1)N_A+1}}{N_A} < \frac{\log A_\rho}{N_A}  - \mu}
\\
& \le \Pb_k\brc{\frac{\lambda_{k+n N_A}^{k+(n-1)N_A+1}}{N_A} <\frac{\log A_\rho}{1+\log A_\rho} (I+\mu-\varepsilon) -\mu} 
\\
&\le \Pb_k\brc{\frac{\lambda_{k+n N_A}^{k+(n-1)N_A+1}}{N_A} <I  -\varepsilon} ,  
\end{aligned}
\]
which holds for all $0<\varepsilon < I+\mu$ and $k\in\Zbb_+$. By condition \eqref{ProbLeftgen}, for a sufficiently large $A$ there exists a small $\delta_A$ such that 
\[
\Pb_k\brc{\frac{1}{N_A}\lambda_{k+n N_A}^{k+(n-1)N_A+1} <I-\varepsilon} \le \delta_A, \quad n \ge 1.
\]
Therefore, for any $\ell \ge 1$, 
\[
\Pb_k \brc{T_A-k > \ell N_A}  \le \delta_A^\ell.
\]
Combining this inequality with \eqref{ExpkTAplus} and using the fact that $L_{r,A}=\sum_{\ell=1}^\infty \ell^{r-1} \delta_A^\ell \to 0$ as $A\to\infty$ for any $r>0$, we obtain
\begin{equation}\label{Momineqrho}
\begin{aligned}
\Eb^\pi[(T_{A}-\nu)^r | T_{A}  > \nu] & = \frac{\sum_{k=0}^\infty \pi_k  \Eb_{k}\brcs{(T_A-k)^+}^r}{1-\PFA(T_A)} 
\\
& \le \frac{\brc{1+\frac{\log (A/\rho)}{I+\mu-\varepsilon}}^r + r 2^{r-1}\, L_{r,A}}{A/(1+A)}   
\\
& = \brc{\frac{\log A}{I+\mu-\varepsilon}}^r(1+o(1))  \quad \text{as}~ A\to\infty.
\end{aligned}
\end{equation}
Since $\varepsilon\in(0,I+\mu)$ is an arbitrary number this implies the upper bound \eqref{UppergenA}.

Applying \eqref{UppergenA} together with the lower bound \eqref{LBTA1} (which holds as before due to condition \eqref{Pmaxgen}) yields \eqref{MADDAOgen1}.

Next, under condition \eqref{Pmaxgen}, for all $m>0$, we have the asymptotic lower bound \eqref{LBinclass} in class $\class$. Substituting $\log A_\alpha \sim |\log\alpha|$ 
in \eqref{MADDAOgen1} (in particular, we may take $A_\alpha=(1-\alpha)/\alpha$) we immediately obtain the asymptotic approximation   \eqref{MOapproxgenalpha} for moments of the detection delay 
of the Shiryaev procedure $T_{A_\alpha}$. This proves \eqref{FOAOmomentsgen}. 
Asymptotic approximations \eqref{MADDAOgen} are obvious from \eqref{MOapproxgenalpha} and \eqref{FOAOmomentsgen}.  This completes the proof of (i).

(ii) Substituting $\rho=\rho_\alpha$ and $A=A_\alpha=(1-\alpha)/\alpha$ (or more generally $\log A_\alpha \sim |\log \alpha|$) in inequality \eqref{Momineqrho}, we obtain
\begin{equation*}
\Eb^{\pi^\alpha}[(T_{A_\alpha}-\nu)^r | T_{A_\alpha}  > \nu]  \le \frac{\brc{1+\frac{\log ((1-\alpha)/\alpha) + |\log \rho_\alpha|}{I+\mu_\alpha-\varepsilon}}^r + 
r 2^{r-1}\, L_{r,A_\alpha}}{1-\alpha} .
\end{equation*} 
By condition \eqref{rate}, the right side is asymptotically as $\alpha\to0$ equal to 
\[
\brc{\frac{|\log \alpha|}{I-\varepsilon}}^r(1+o(1)),
\]
which along with the lower bound \eqref{LBinclass} ($\mu_\alpha\to0$ as $\alpha\to0$) completes the proof.
\end{proof}

\begin{remark}
The assertions of Theorem~\ref{Th:FOAOindep} hold whenever the normalized LLR processes $n^{-1}\lambda_{\ell+n}^\ell$, $\ell =k, k+1,\dots$ converge almost surely to a constant $I$ 
under $\Pb_k$ for all $k \in \Zbb_+$, 
since in this case both conditions \eqref{Pmaxgen} and \eqref{ProbLeftgen} are satisfied. In the iid case, the assertions of the 
theorem are true with $I=\mc{K}$ being the Kullback--Leibler information number \eqref{KL},
assuming that $0<\mc{K}<\infty$. Indeed, in this case, the conditions of Theorem~\ref{Th:FOAOindep} hold with $I=\mc{K}$ by the SLLN. This result has been previously established by \cite{TV2005}
using a completely different technique.
\end{remark}

\begin{remark}
Theorem~\ref{Th:FOAOindep}(i) can be generalized for the arbitrary prior distribution satisfying condition ($ \Cb$) and Theorem~\ref{Th:FOAOindep}(ii)  for prior distributions satisfying condition \eqref{Prior} 
with parameter $\mu=\mu_\alpha=o(|\log\alpha|)$ as $\alpha\to0$. However, in this general case, the proof becomes very tedious and obscures the main ideas. For this reason, we focused on 
the geometric prior, which is not an overly restrictive assumption, especially in part (ii).  
\end{remark}

\section{Asymptotic operating characteristics of the Shiryaev--Roberts procedure}\label{sec:SR} 
 
 In this section, we discuss asymptotic operating characteristics of the SR procedure $\wtT_B$ defined in \eqref{SRproc} and \eqref{SRstat}. While the methods are similar to those used in the previous section,
 there are specific features and certain differences that have to be considered separately.

 \subsection{The non-iid case}\label{ssec:NoniidcaseSR}
 
 As mentioned in Subsection~\ref{ssec:Procedures}, the SR statistic $R_n$ 
 is the limit of the statistic $\Lambda_n/\rho$ as $\rho\to0$ when the prior distribution is geometric \eqref{Geom} with $q=0$.
 Therefore, it is intuitively appealing, based on the results of Theorem~\ref{Th:FOAOgen2}, that 
 \[
 \Eb^\pi(\wtT_B -\nu)^r | \wtT_B>\nu] \sim \brc{\frac{\log B}{I}}^r \quad \text{as}~ B\to \infty,
 \]
and therefore, if we can select $B=B_\alpha$ so that $\PFA(\wtT_{B_\alpha}) \le \alpha$ and $\log B_\alpha \sim  |\log\alpha|$, then the SR procedure is also asymptotically as $\alpha\to 0$ 
optimal whenever $\rho_\alpha\to 0$ at an appropriate rate. Below we show that this is indeed true.

The first question is how to select threshold $B_\alpha$ in order to embed the SR procedure into class $\class$. To answer this question, it suffices to note that under $\Pb_\infty$ the 
SR statistic $R_n$ is a  submartingale with mean $\Eb_\infty R_n=n$, so that applying Doob's submartingale inequality, we obtain
 \[
 \Pb_\infty(\wtT_B \le j) =\Pb_\infty\brc{\max_{1\le i \le j} R_i \ge B} \le j/B, \quad j=1,2,\dots,
 \]
and hence,
\begin{equation}\label{PFASR}
\PFA(\wtT_{B}) =\sum_{j=0}^\infty \pi_j \Pb_\infty(\wtT_B \le j) \le \bar{\nu} /B ,
\end{equation}
where $\bar{\nu} = \sum_{j=1}^\infty j \pi_j$. Therefore, assuming that $\bar{\nu}<\infty$, we obtain that setting $B=B_\alpha = \bar\nu/\alpha$ implies $\wtT_{B_\alpha} \in \class$. 
If, in a particular case, the prior distribution is geometric, then $\PFA(\wtT_{B}) \le (1-\rho)/(\rho B)$. 

\begin{theorem}\label{Th:AOCSR}
Let $\wtT_B$ be the SR changepoint detection procedure defined in  \eqref{SRproc}. Let $\bar\nu=\sum_{j=1}^\infty j \pi_j <\infty$. Let $r\ge 1$. 
Assume that for some  number $0<I<\infty$ conditions \eqref{Pmaxgen} and \eqref{rcompleteleftgen} are satisfied.

\noindent {\rm \bf (i)}  Then for all $0<m \le r$
\begin{equation} \label{MomentsSR1}
\lim_{B\to\infty}  \frac{\Eb^\pi[(\wtT_B-\nu)^m | \wtT_B > \nu]}{(\log B)^m} =\frac{1}{I^m}.
\end{equation}

\noindent {\rm \bf (ii)}  Let $B=B_\alpha=\bar\nu/\alpha$. Then $\wtT_{B_\alpha}\in \class$ and for all $0<m \le r$,
\begin{equation}\label{MomentsSR2}
\lim_{\alpha\to0} \frac{\Eb^\pi[(\wtT_{B_\alpha}-\nu)^m | \wtT_{B_\alpha} > \nu]}{|\log \alpha|^m} =\frac{1}{I^m} .
\end{equation}
This assertion also holds if $B=B_\alpha$ is selected so that $\PFA(T_{B_\alpha}) \le \alpha$ and $\log B_\alpha\sim |\log\alpha|$ as $\alpha\to0$.
\end{theorem}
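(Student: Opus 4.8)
\textbf{Proof plan for Theorem~\ref{Th:AOCSR}.}
The strategy mirrors the treatment of the Shiryaev procedure in Section~\ref{sec:Gendiscr}, replacing the posterior-odds statistic $\Lambda_n$ by the SR statistic $R_n$ and exploiting the fact that $R_n\ge \prod_{i=k+1}^n\Lc_i=\exp\{\lambda_n^k\}$ for every $k<n$. The proof splits into a lower bound and an upper bound, and part (ii) follows from part (i) by inserting $B=B_\alpha=\bar\nu/\alpha$ (or any $B_\alpha$ with $\log B_\alpha\sim|\log\alpha|$ and $\wtT_{B_\alpha}\in\class$, which is legitimate by \eqref{PFASR}).

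For the lower bound, I would first argue that for any fixed $k$ and any $\varepsilon>0$ one has $\wtT_B\ge k$ eventually, and more precisely that the analogue of the lower-bound reasoning behind Lemma~\ref{Lem:LB} applies verbatim: the right-tail condition \eqref{Pmaxgen} controls how fast $\lambda_{k+n}^k$ can exceed $(1+\varepsilon)I\,n$, and since $R_n\ge\exp\{\lambda_n^k\}$ is the dominant term, stopping before time $k+(1-\varepsilon)\log B/I$ forces $\max_{n\le k+N}\lambda_{k+n}^k$ to be abnormally large; a Chebyshev/Markov argument as in the appendix proof of Lemma~\ref{Lem:LB} then gives $\Pb_k(\wtT_B-k<(1-\varepsilon)\log B/I)\to0$, hence $\liminf_{B\to\infty}\Eb^\pi[(\wtT_B-\nu)^m\mid\wtT_B>\nu]/(\log B)^m\ge I^{-m}$. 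The only difference from the Shiryaev case is that here there is \emph{no} $\mu$ term: the SR statistic has no built-in geometric discounting, so the effective Kullback--Leibler rate is $I$ rather than $I+\mu$, which is exactly why \eqref{MomentsSR1} has $I^{-m}$ in the denominator. For part (ii) one combines this with the class-$\class$ lower bound $\inf_{T\in\class}\Eb^\pi[(T-\nu)^m\mid T>\nu]\gtrsim(|\log\alpha|/(I+\mu))^m$; since $B_\alpha=\bar\nu/\alpha$ gives $\log B_\alpha\sim|\log\alpha|$, the upper bound $I^{-m}$ on the SR moment matches the lower bound $(I+\mu)^{-m}$ in the class only when $\mu=0$, which is why the SR procedure is in general \emph{not} Bayes-optimal — consistent with the remark in the introduction.

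For the upper bound I would reprove, for the SR statistic, an inequality analogous to \eqref{Ekineq} in Lemma~\ref{Lem:UpperEk}. Fix $\varepsilon\in(0,I)$ and set $N_B=1+\lfloor\log B/(I-\varepsilon)\rfloor$. Using $R_{k+n}\ge\exp\{\lambda_{k+n}^k\}$, on the event $\{\wtT_B-k>n\}$ we have $\lambda_{k+n}^k<\log B$, i.e.\ $n^{-1}\lambda_{k+n}^k<\log B/n$; for $n\ge N_B$ this forces $n^{-1}\lambda_{k+n}^k<I-\varepsilon$. Hence
\begin{equation*}
\Eb_k[(\wtT_B-k)^+]^r\le N_B^r+r2^{r-1}\sum_{n=1}^\infty n^{r-1}\Pb_k\brc{\tfrac1n\lambda_{k+n}^k<I-\varepsilon}=N_B^r+r2^{r-1}\Upsilon_{k,r}(\varepsilon),
\end{equation*}
by the same telescoping/binomial bookkeeping as in \eqref{ExpkTAplus}. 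Summing against $\pi_k$, dividing by $1-\PFA(\wtT_B)$, and using \eqref{rcompleteleftgen} together with $\bar\nu<\infty$ (which makes $\PFA(\wtT_B)\to0$ and also makes $\sum_k\pi_k N_B^r\sim(\log B/(I-\varepsilon))^r$ since $N_B$ does not depend on $k$) yields $\limsup_{B\to\infty}\Eb^\pi[(\wtT_B-\nu)^r\mid\wtT_B>\nu]/(\log B)^r\le(I-\varepsilon)^{-r}$; letting $\varepsilon\downarrow0$ gives the matching upper bound, and the case $0<m\le r$ follows by Lyapunov's (or Jensen's) inequality exactly as in the proof of Theorem~\ref{Th:FOAOgen}. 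The main obstacle — and the one genuinely new point compared with Section~\ref{sec:Gendiscr} — is the bookkeeping around the fact that $N_B$ is $k$-independent (unlike $N_A\approx\log(A/\pi_k)/(I+\mu)$ in the Shiryaev case, where the $\log(1/\pi_k)$ correction required condition \eqref{Prior1}); here the $\pi_k$-summation of the leading term is trivial, so no tail condition on the prior beyond $\bar\nu<\infty$ is needed, and one must be careful that this is exactly what replaces condition~($\Cb$).
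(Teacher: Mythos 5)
Your upper bound is exactly the paper's argument: $R_{k+n}\ge e^{\lambda_{k+n}^k}$, the choice $N_{B,\varepsilon}=1+\lfloor\log B/(I-\varepsilon)\rfloor$, the bound $\Eb_k[(\wtT_B-k)^+]^r\le N_{B,\varepsilon}^r+r2^{r-1}\Upsilon_{k,r}(\varepsilon)$, summation against $\pi_k$, and $1-\PFA(\wtT_B)\ge 1-\bar\nu/B$; part (ii) by substituting $\log B_\alpha\sim|\log\alpha|$ is also as in the paper. The gap is in the lower bound. First, your mechanism is stated backwards: $R_n\ge e^{\lambda_n^k}$ means a large $\lambda_n^k$ makes $R_n$ large, so $\wtT_B$ stopping before $k+(1-\varepsilon)I^{-1}\log B$ does \emph{not} force $\max_{n}\lambda_{k+n}^k$ to be abnormally large (at best $R_n\ge B$ forces some $\lambda_n^m$, $m$ arbitrary, to exceed $\log(B/n)$, which condition \eqref{Pmaxgen} does not control). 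The correct route, as in \eqref{Pktauupper}, is a change-of-measure splitting: $\Pb_k(0<\wtT_B-k<M_{B,\varepsilon})\le e^{(1+\varepsilon)IM_{B,\varepsilon}}\Pb_\infty(0<\wtT_B-k<M_{B,\varepsilon})+\beta_{B,\varepsilon}^k$, with no ``forcing'' claim.

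Second, and more substantively, Lemma~\ref{Lem:LB} does \emph{not} apply verbatim and would only give the rate $I+\mu$: its proof bounds $\Pb_\infty(T\le k)$ through the class constraint, $\sup_{T\in\class}\Pb_\infty(T\le k)\le\alpha/\Pb(\nu\ge k)$, and the prior tail $\Pb(\nu\ge k+N)\approx e^{-\mu(k+N)}$ is precisely what produces the $(I+\mu)^{-m}$ bound. A lower bound at rate $(I+\mu)^{-m}$ is strictly weaker than what \eqref{MomentsSR1} requires when $\mu>0$, since it does not match your upper bound $(\log B/I)^m$. The new ingredient the paper uses — and your sketch omits — is that under $\Pb_\infty$ the SR statistic is a submartingale with $\Eb_\infty R_n=n$, so Doob's inequality gives the prior-free bound $\Pb_\infty(\wtT_B\le j)\le j/B$; with $M_{B,\varepsilon}=(1-\varepsilon)I^{-1}\log B$ the change-of-measure factor is then absorbed as $e^{(1+\varepsilon)IM_{B,\varepsilon}}(k+M_{B,\varepsilon})/B\le (k+M_{B,\varepsilon})/B^{\varepsilon^2}$, which is why the lower bound holds at rate $I$ with no $\mu$. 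Finally, $\bar\nu<\infty$ is needed not only for $\PFA(\wtT_B)\le\bar\nu/B$ but also to truncate the prior sum: one splits at $K_B\to\infty$ and uses $\Pb(\nu>K_B)\le\bar\nu/K_B\to0$, with the $\beta_{B,\varepsilon}^k$ terms handled by \eqref{Pmaxgen} and dominated convergence. Your heuristic (``no geometric discounting in $R_n$, hence rate $I$'') explains the upper bound but does not substitute for this argument.
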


\begin{proof}
(i)  For $\varepsilon\in(0,1)$, let $M_{B,\varepsilon} = (1-\varepsilon) I^{-1} \log B$. Using Chebyshev's inequality and inequality \eqref{PFASR},  similarly to \eqref{A1} we obtain that
\begin{equation}\label{LBNA}
\Eb^\pi[(\wtT_B-\nu)^m | \wtT_B >\nu] \ge M_{B,\varepsilon}^m\brcs{1-\bar\nu/B - \Pb^\pi \brc{0 < \wtT_B-\nu < M_{B,\varepsilon}}} .
\end{equation}
Now, similarly to \eqref{Pktauupper},
\begin{equation}\label{PkTBupper}
 \Pb_k\brc{0 < \wtT_B -k < M_{B,\varepsilon}} \le  U_{B,\varepsilon}^{k}(\wtT_B)  + \beta_{B,\varepsilon}^{k} ,
\end{equation}
where
\[
\begin{aligned} 
U_{B,\varepsilon}^{k}(\wtT_B) = e^{(1+\varepsilon) I M_{B,\varepsilon}}\Pb_\infty\brc{0 < \wtT_B - k <M_{B,\varepsilon}}, \quad
\beta_{B,\varepsilon}^{k}  = \Pb_k\brc{\frac{1}{M_{B,\varepsilon}} \max_{1\le n \le M_{B,\varepsilon}} \lambda_{k+n}^k \ge (1+\varepsilon) \, I} .
\end{aligned}
\]
 Since 
\[
\Pb_\infty\brc{0 < \wtT_B - k <M_{B,\varepsilon}} \le \Pb_\infty\brc{\wtT_B  < k+ M_{B,\varepsilon}} \le (k+M_{B,\varepsilon})/B,
\]
we have
\begin{equation}\label{UpperU}
 U_{B,\varepsilon}^{k}(\wtT_B) \le \frac{k+ (1-\varepsilon) I^{-1} \log B}{B^{\varepsilon^2}}.
\end{equation}
Let $K_B$ be an integer number that approaches infinity as $B\to\infty$. Using \eqref{PkTBupper} and \eqref{UpperU}, we obtain the following upper bound 
\begin{align}\label{ProbineqSR}
\Pb^\pi(0< \wtT_B -\nu < M_{B,\varepsilon}) & = \sum_{k=0}^\infty  \pi_k  \Pb_k\brc{0 < \wtT_B -k < M_{B,\varepsilon}} \nonumber
 \\
 &\le \Pb(\nu > K_{B}) +   \sum_{k=0}^\infty \pi_k U_{B,\varepsilon}^{k}(\wtT_B) + \sum_{k=0}^{K_{B}}  \pi_k  \beta_{B,\varepsilon}^{k} \nonumber
 \\ 
 & \le  \Pb(\nu > K_{B}) + \frac{\bar\nu+(1-\varepsilon) I^{-1} \log B}{B^{\varepsilon^2}}+  \sum_{k=0}^{K_{B}}  \pi_k  \beta_{B,\varepsilon}^{k},
 \end{align}
where the first two  terms go to zero as $B\to\infty$ for all $\varepsilon>0$ since $\bar\nu$ is finite (note that by Markov's inequality $\Pb(\nu>K_B) \le \bar\nu/K_B$) and the last term also goes to zero  
by condition \eqref{Pmaxgen} and Lebesgue's dominated convergence theorem. Thus, for all $0<\varepsilon <1$,
\[
\Pb^\pi(0< \wtT_B -\nu < M_{B,\varepsilon})  \to 0 \quad \text{as}~ B\to\infty
\]   
and applying inequality \eqref{LBNA}, we obtain that for any $0<\varepsilon <1$ as $B\to\infty$
\[
\Eb^\pi[(\wtT_B-\nu)^m | \wtT_B >\nu] \ge (1-\varepsilon)^m \brc{\frac{\log B}{I}}^m (1+o(1)).
\]
Since $\varepsilon$ can be arbitrarily small, this inequality yields the asymptotic lower bound (for any $m>0$)
\begin{equation}\label{LBA}
\Eb^\pi[(\wtT_B-\nu)^m | \wtT_B >\nu] \ge \brc{\frac{\log B}{I}}^m (1+o(1)) \quad \text{as}~ B\to\infty .
\end{equation}

To complete the proof of assertion (i), we now need to show that this lower bound is also an upper bound asymptotically as $B\to\infty$. In just the same way as in the proof of Lemma~\ref{Lem:UpperEk}
(see \eqref{Ektauineq}), we obtain
\begin{align}\label{EkTBineq}
\Eb_{k}\brcs{(\wtT_B-k)^+}^r  & = \int_0^\infty r t^{r-1} \Pb_k\brc{\wtT_B -k > t} \, \D t   \nonumber
 \\
 & \le N_{B,\varepsilon}^{r} + r 2^{r-1} \sum_{n=N_{B,\varepsilon}}^{\infty}   n^{r-1}   \Pb_k\brc{\wtT_B -k >  n},
 \end{align}
where $N_{B,\varepsilon}=1+\lfloor (\log B)/(I-\varepsilon) \rfloor$.
Clearly, $R_n \ge e^{\lambda_n^k}$ (for any $n>k$), and therefore,
\[
 \Pb_k\brc{\wtT_B -k >  n} = \Pb_k \brc{\max_{1\le i \le n+k} R_i < B} \le \Pb_k \brc{\frac{1}{n} \lambda_{k+n}^k < \frac{1}{n} \log B} .
\]
But  for all $k \in \Zbb_+$ and $n\ge N_{B,\varepsilon}$ the latter probability can be upper-bounded as
\[
\Pb_k \brc{\frac{1}{n} \lambda_{k+n}^k < \frac{1}{n} \log B} \le \Pb_k \brc{\frac{1}{n} \lambda_{k+n}^k < I-\varepsilon},
\]
so that for all $k \in \Zbb_+$ and $n\ge N_{B,\varepsilon}$
\[
 \Pb_k\brc{\wtT_B -k >  n} \le \Pb_k \brc{\frac{1}{n} \lambda_{k+n}^k < I-\varepsilon}.
\]
Substituting this upper bound in inequality \eqref{EkTBineq} yields (for every $0<\varepsilon < I$)
\begin{equation*}
\begin{aligned}
\Eb^\pi[(\wtT_B-\nu)^r | \wtT_B > \nu] & = \frac{\sum_{k=0}^\infty \pi_k  \Eb_{k}[(\wtT_B-k)^+]^r}{1-\PFA(\wtT_B)} 
\\
&\le\frac{\brc{1+\frac{\log B}{I-\varepsilon}}^r + r 2^{r-1}\, \sum_{k=0}^\infty \pi_k  \Upsilon_{k,r}(\varepsilon)}{1-\bar\nu/B} ,
\end{aligned}
\end{equation*}
where we used the inequality $1-\PFA(\wtT_B) \ge 1-\bar\nu/B$. By condition \eqref{rcompleteleftgen},
\[
 \sum_{k=0}^\infty \pi_k  \Upsilon_{k,r}(\varepsilon) < \infty \quad \text{for any}~ \varepsilon >0,
\]
which implies that, for every $0<\varepsilon < I$
\[
\Eb^\pi[(\wtT_B-\nu)^r | \wtT_B > \nu] \le \brc{\frac{\log B}{I-\varepsilon}}^r (1+o(1)) \quad \text{as}~ B \to \infty.
\]
Since $\varepsilon$ can be arbitrarily small, this implies the asymptotic upper bound
\begin{equation}\label{UBB}
\Eb^\pi[(\wtT_B-\nu)^r | \wtT_B > \nu] \le \brc{\frac{\log B}{I}}^r (1+o(1)) \quad \text{as}~ B \to \infty.
\end{equation}
This completes the proof of (i).

(ii) Substitution of $B=B_\alpha=\bar\nu/\alpha$ (or more generally $\log B_\alpha \sim |\log \alpha|$) in \eqref{MomentsSR1} immediately yields \eqref{MomentsSR2}, and the proof is complete.
\end{proof}

Theorem~\ref{Th:AOCSR} does not cover the case of prior distributions with exponential tails ($\mu>0$) but with $\mu=\mu_\alpha$ that depends on the PFA $\alpha$ and approaches zero as $\alpha\to0$. 
The next theorem, which is similar to Theorem~\ref{Th:FOAOgen2}, addresses this case.

\begin{theorem}\label{Th:AoptSR}
Let $r\ge 1$.  Assume that the prior distribution $\pi^\alpha=\{\pi_k^\alpha\}$ of the change point $\nu$ satisfies condition \eqref{Prior} 
with $\mu=\mu_\alpha\to 0$ as $\alpha\to 0$ and that $\mu_\alpha$
approaches zero at such rate that $\bar\nu_\alpha=\sum_{k=1}^\infty  k \, \pi_k^\alpha$ increases at a rate slower than $|\log\alpha|$ as $\alpha\to0$, i.e., 
\begin{equation}\label{Prior4}
\lim_{\alpha\to 0} \frac{{\bar\nu_\alpha}}{|\log \alpha|} = 0.
\end{equation} 
Assume that for some $0<I<\infty$ conditions \eqref{Pmaxgen} and \eqref{uniformrcomplete} are satisfied.  
If $B=B_\alpha=\bar\nu_\alpha/\alpha$, then $\PFA(T_{B_\alpha}) \le \alpha$ and  for all $0<m \le r$
\begin{equation} \label{MomentsSRmu0}
\inf_{T \in \class} \Eb^{\pi^\alpha}[(T-\nu)^m | T> \nu] \sim  \Eb^{\pi^\alpha} [(\wtT_{B_\alpha}-\nu)^m | \wtT_{B _\alpha}> \nu] \sim \brc{\frac{|\log\alpha|}{I}}^m \quad \text{as}~ \alpha \to 0.
\end{equation}
Therefore, the SR procedure $\wtT_{B_\alpha}$ is asymptotically optimal as $\alpha\to0$ in class $\class$, minimizing moments of the detection delay up to order $r$.
\end{theorem}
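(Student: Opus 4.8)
The plan is to combine the two one-sided asymptotic bounds exactly as in the proof of Theorem~\ref{Th:AOCSR}, but carrying the dependence on $\mu_\alpha$ through the estimates and invoking condition \eqref{Prior4} where finiteness of $\bar\nu$ was used before. First I would establish that $\wtT_{B_\alpha}\in\class$: by the submartingale inequality \eqref{PFASR}, $\PFA(\wtT_{B_\alpha})\le\bar\nu_\alpha/B_\alpha=\alpha$, which requires only $\bar\nu_\alpha<\infty$ (guaranteed for $\mu_\alpha>0$). For the lower bound I would reuse Lemma~\ref{Lem:LB2}: since the prior satisfies \eqref{Prior} with $\mu_\alpha\to0$, inequality \eqref{LBinclass2} gives $\liminf_{\alpha\to0}\inf_{T\in\class}\Eb^{\pi^\alpha}[(T-\nu)^m\mid T>\nu]/|\log\alpha|^m\ge I^{-m}$ for all $m>0$; in particular this lower-bounds the delay of $\wtT_{B_\alpha}$ as well, since $\wtT_{B_\alpha}\in\class$.

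For the upper bound I would rerun the cycle-free estimate \eqref{EkTBineq}--\eqref{UBB} with $B=B_\alpha$. The key point is that the bound $\Pb_k(\wtT_B-k>n)\le\Pb_k(n^{-1}\lambda_{k+n}^k<I-\varepsilon)$ for $n\ge N_{B,\varepsilon}=1+\lfloor(\log B)/(I-\varepsilon)\rfloor$ is completely insensitive to the prior, so it goes through verbatim. Summing against $\pi_k^\alpha$ and using $1-\PFA(\wtT_{B_\alpha})\ge 1-\alpha$, I get
\[
\Eb^{\pi^\alpha}[(\wtT_{B_\alpha}-\nu)^r\mid\wtT_{B_\alpha}>\nu]\le\frac{\brc{1+\tfrac{\log B_\alpha}{I-\varepsilon}}^r+r2^{r-1}\sup_{k\ge0}\Upsilon_{k,r}(\varepsilon)}{1-\alpha},
\]
where I have bounded $\sum_k\pi_k^\alpha\Upsilon_{k,r}(\varepsilon)\le\sup_k\Upsilon_{k,r}(\varepsilon)<\infty$ using the uniform $r$-complete condition \eqref{uniformrcomplete}. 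Now $\log B_\alpha=|\log\alpha|+\log\bar\nu_\alpha$, and condition \eqref{Prior4} forces $\log\bar\nu_\alpha=o(|\log\alpha|)$ (since $\bar\nu_\alpha/|\log\alpha|\to0$ implies $\log\bar\nu_\alpha-\log|\log\alpha|\to-\infty$, so certainly $\log\bar\nu_\alpha=o(|\log\alpha|)$), whence $\log B_\alpha\sim|\log\alpha|$. Therefore the right-hand side is $\brc{|\log\alpha|/(I-\varepsilon)}^r(1+o(1))$, and letting $\varepsilon\downarrow0$ gives $\Eb^{\pi^\alpha}[(\wtT_{B_\alpha}-\nu)^r\mid\wtT_{B_\alpha}>\nu]\le\brc{|\log\alpha|/I}^r(1+o(1))$. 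By Lyapunov's inequality the same upper bound holds for every $0<m\le r$.

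Matching the upper bound with the lower bound from Lemma~\ref{Lem:LB2} yields $\Eb^{\pi^\alpha}[(\wtT_{B_\alpha}-\nu)^m\mid\wtT_{B_\alpha}>\nu]\sim(|\log\alpha|/I)^m$ and simultaneously $\inf_{T\in\class}\Eb^{\pi^\alpha}[(T-\nu)^m\mid T>\nu]\sim(|\log\alpha|/I)^m$, which is exactly \eqref{MomentsSRmu0}, and hence asymptotic optimality of $\wtT_{B_\alpha}$. The statement ``also holds if $\log B_\alpha\sim|\log\alpha|$ and $\PFA(\wtT_{B_\alpha})\le\alpha$'' would follow because both estimates above depend on $B_\alpha$ only through $\log B_\alpha\sim|\log\alpha|$. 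I expect the only genuinely delicate point to be confirming that \eqref{Prior4} really does deliver $\log B_\alpha\sim|\log\alpha|$ rather than just $\log B_\alpha=O(|\log\alpha|)$ — but as noted above, $\bar\nu_\alpha=o(|\log\alpha|)$ gives $\log\bar\nu_\alpha=o(|\log\alpha|)$, so the asymptotic equivalence is secure; everything else is a direct transcription of the arguments already carried out for Theorem~\ref{Th:AOCSR} and Theorem~\ref{Th:FOAOgen2}.
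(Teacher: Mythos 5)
Your proposal is correct, and its upper bound is precisely the paper's argument: the prior-free estimate $\Pb_k(\wtT_B-k>n)\le\Pb_k(n^{-1}\lambda_{k+n}^k<I-\varepsilon)$ for $n\ge N_{B,\varepsilon}$, summed against $\pi_k^\alpha$ with $\sup_{k\ge0}\Upsilon_{k,r}(\varepsilon)<\infty$ from \eqref{uniformrcomplete} and $\log B_\alpha\sim|\log\alpha|$ from \eqref{Prior4}. The one structural difference is the lower bound: the paper re-derives a lower bound for the SR stopping time itself (Chebyshev's inequality plus the estimate \eqref{ProbineqSR} with $K_\alpha=\lfloor C|\log\alpha|\rfloor$, where \eqref{Prior4} enters again through Markov's inequality $\Pb(\nu>K_\alpha)\le\bar\nu_\alpha/K_\alpha$), and only afterwards invokes the class-level bound for the first equivalence in \eqref{MomentsSRmu0}. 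You instead obtain both equivalences from Lemma~\ref{Lem:LB2} together with the membership $\wtT_{B_\alpha}\in\class$ (which you establish via \eqref{PFASR}); since the hypotheses of Lemma~\ref{Lem:LB2} — condition \eqref{Prior} with $\mu_\alpha\to0$ and condition \eqref{Pmaxgen} — are among the theorem's assumptions, the sandwich $\inf_{T\in\class}\Eb^{\pi^\alpha}[(T-\nu)^m\mid T>\nu]\le\Eb^{\pi^\alpha}[(\wtT_{B_\alpha}-\nu)^m\mid\wtT_{B_\alpha}>\nu]$ closes the argument and makes the paper's separate SR-specific lower bound unnecessary; this is a legitimate and slightly leaner route. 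One small caveat, which you share with the paper rather than introduce: deducing $\log\bar\nu_\alpha=o(|\log\alpha|)$ (hence $\log B_\alpha\sim|\log\alpha|$) from \eqref{Prior4} tacitly uses that $\bar\nu_\alpha$ stays bounded away from zero — your parenthetical argument only bounds $\log\bar\nu_\alpha$ from above, and if $\bar\nu_\alpha\to0$ rapidly the equivalence could fail; the theorem's phrasing that $\bar\nu_\alpha$ \emph{increases} is what rules this out, and it is worth saying so explicitly.
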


\begin{proof}
Similarly to \eqref{LBNA}, 
\begin{equation*}
\Eb^{\pi^\alpha}[(\wtT_{B_\alpha}-\nu)^m | \wtT_{B_\alpha} >\nu] \ge M_{B_\alpha,\varepsilon}^m\brcs{1-\alpha - \Pb^{\pi^\alpha} \brc{0 < \wtT_{B_\alpha}-\nu < M_{B_\alpha,\varepsilon}}} .
\end{equation*}
Let $K_\alpha= \lfloor C |\log\alpha|\rfloor$ with some positive constant $C$. Substituting $B=B_\alpha=\bar\nu_\alpha/\alpha$ in inequality \eqref{ProbineqSR}, we obtain
\[
\Pb^{\pi^\alpha}(0< \wtT_{B_\alpha} -\nu < M_{B_\alpha,\varepsilon})  \le  \Pb(\nu > K_{\alpha}) + \frac{\bar\nu_\alpha+(1-\varepsilon) I^{-1} \log (\bar\nu_\alpha/\alpha)}{(\bar\nu_\alpha/\alpha)^{\varepsilon^2}}+  
\sum_{k=0}^{K_{\alpha}}  \pi_k^\alpha  \beta_{B_\alpha,\varepsilon}^{k}.
 \] 
As before, the last term approaches zero as $\alpha\to0$. It is easily verified that the middle term also approaches zero as long as condition \eqref{Prior4} is satisfied. Finally, by the Markov inequality 
and condition \eqref{Prior4},
\[
\Pb(\nu > K_{\alpha}) \le \bar\nu_\alpha/K_\alpha =o(|\log\alpha|)/C |\log\alpha| \to 0 \quad \text{as}~\alpha\to0,
\]
so $\lim_{\alpha\to0} \Pb(\nu > K_{\alpha})=0$. Therefore, we conclude that for all $0<\varepsilon <1$,
\[
\Pb^\pi(0< \wtT_{B_\alpha} -\nu < M_{B_\alpha,\varepsilon})  \to 0 \quad \text{as}~ \alpha\to 0 .
\]   
Since by \eqref{Prior4}, $\log B_\alpha\sim |\log\alpha|$, we obtain that, for all $m>0$,
\begin{equation}\label{LBalpha}
\Eb^{\pi^\alpha}[(\wtT_{B_\alpha}-\nu)^m | \wtT_{B_\alpha} >\nu] \ge \brc{\frac{|\log \alpha|}{I}}^m (1+o(1)) \quad \text{as}~ \alpha\to0 .
\end{equation}

The upper bound
\begin{equation}\label{UBalpha}
\Eb^{\pi^\alpha}[(\wtT_{B_\alpha}-\nu)^m | \wtT_{B_\alpha} >\nu] \le \brc{\frac{|\log \alpha|}{I}}^m (1+o(1)) \quad \text{as}~ \alpha\to0 
\end{equation}
is obtained in the manner absolutely analogous to the proof of the upper bound \eqref{UBB} in the previous theorem. Specifically, 
for all $k \in \Zbb_+$ and $n\ge N_{B_\alpha,\varepsilon}$
\[
 \Pb_k\brc{\wtT_{B_\alpha} -k >  n} \le \Pb_k \brc{\frac{1}{n} \lambda_{k+n}^k < I-\varepsilon}
\]
and 
\[
\Eb_{k}\brcs{(\wtT_B-k)^+}^r   \le N_{B_\alpha,\varepsilon}^{r} + r 2^{r-1} \sum_{n=N_{B_\alpha,\varepsilon}}^{\infty}   n^{r-1}   \Pb_k\brc{\wtT_{B_\alpha} -k >  n},
 \]
so that 
\begin{equation*}
\Eb^{\pi^\alpha}[(\wtT_{B_\alpha}-\nu)^r | \wtT_{B_\alpha} > \nu] \le\frac{\brc{1+\frac{\log(\bar\nu_\alpha/\alpha)}{I-\varepsilon}}^r + r 2^{r-1}\, \sup_{k\ge0} \Upsilon_{k,r}(\varepsilon)}{1-\alpha}.
\end{equation*}
Taking into account that, by condition \eqref{Prior4}, $\log(\bar\nu_\alpha/\alpha)\sim|\log\alpha|$ and that, by condition \eqref{uniformrcomplete}, $\sup_{k\ge0}\Upsilon_{k,r}(\varepsilon)<\infty$,
yields the asymptotic upper bound \eqref{UBalpha}.

Now, applying the bounds \eqref{LBalpha} and \eqref{UBalpha} simultaneously, we obtain the asymptotic approximation
\[
\Eb^{\pi^\alpha}[(\wtT_{B_\alpha}-\nu)^m | \wtT_{B _\alpha}> \nu] \sim \brc{\frac{|\log\alpha|}{I}}^m \quad \text{as}~ \alpha \to 0,
\]
i.e., the second approximation in \eqref{MomentsSRmu0}. The first one follows from the lower bound \eqref{LBinclass} (with $\mu=\mu_\alpha \to 0$). The proof is complete. 
\end{proof}

\begin{remark}
If the prior distribution is geometric \eqref{Geom}, then $\mu=|\log(1-\rho)|$ and $\bar\nu=(1-\rho)/\rho$ and Theorem~\ref{Th:AoptSR} holds whenever 
the parameter $\rho=\rho_\alpha\to0$ at the rate $|\log\rho_\alpha|=o(|\log\alpha|)$. 
\end{remark}

Comparing asymptotic formula \eqref{MomentsSR2} with asymptotic lower bound \eqref{LBinclass} in class $\class$, we see that, opposed to the Shiryaev procedure, the SR procedure is 
not asymptotically optimal so long as $\mu>0$, i.e., if the prior distribution has exponential tail. If the tail is heavy, i.e., $\mu=0$, then the SR procedure is asymptotically optimal and, 
by Theorem~\ref{Th:AoptSR}, the same is true if
$\mu=\mu_\alpha \to 0$ as $\alpha\to0$ at a suitable rate. This is intuitively expected, since the SR statistic does not exploit the prior distribution, relying on the improper uniform prior. 
However, there still may be a problem when applying the latter asymptotic 
result in practice. Indeed, there is no guarantee that the bound $\bar\nu/B$ in inequality \eqref{PFASR} is relatively tight in a sense that $|\log \PFA(\wtT_B)| \sim \log B$ as $B\to\infty$,
i.e., that for a sufficiently large $B$, $\PFA(\wtT_B) \approx \const/B$, unless $\mu/I$ is small if the prior satisfies condition \eqref{Prior} with $\mu>0$. Even in the case of heavy-tailed priors 
($\mu=0$) this is perhaps not true. In this respect, \cite{TM2010} conjectured that asymptotically as $B\to\infty$ the accurate 
approximation is $\PFA(\wtT_B) \sim O(1)/B^{s(\mu)}$, where
$s(\mu)>1$ for all $\mu>0$ and $s(\mu) \to 0$ as $\mu\to0$. If this conjecture is correct, which is partially justified in \cite{TM2010} by numerical computations, then the asymptotic relative efficiency of the 
asymptotically optimal Shiryaev procedure compared to the SR procedure is $[I s(\mu)/(I+\mu)]^m$, but not $[I/(I+\mu)]^m$, as Theorem~\ref{Th:AOCSR} suggests. Note that this is expected to be true only for
the priors with the exponential tail, but not necessarily for heavy-tailed priors.

 \subsection{The case of independent observations}\label{ssec:IndepcaseSR}

We now provide a theorem for the SR procedure similar to Theorem~\ref{Th:FOAOindep} in the case where  the LLR has independent increments.

\begin{theorem}\label{Th:FOAOindepSR}
Let $\wtT_B$ be the SR changepoint detection procedure. Let $r\ge 1$. Assume that the LLR process $\{\lambda_{k+n}^k\}_{n\ge 1}$ has independent, 
not necessarily identically distributed increments under $\Pb_k$, $k\in\Zbb_+$.  Suppose that conditions \eqref{Pmaxgen} and \eqref{ProbLeftgen} are satisfied.
 
\noindent {\rm \bf (i)} Let the prior distribution $\pi=\{\pi_k\}$ be geometric \eqref{Geom} with $q=0$.  
Then relation \eqref{MomentsSR1} holds for all $m>0$. If $B=B_\alpha=(1-\rho)/\rho\alpha$, then 
$\wtT_{B_\alpha}\in \class$ and relation \eqref{MomentsSR2} holds for all $m>0$. 

\noindent {\rm \bf (ii)} Let the prior distribution $\pi^{\alpha}=\{\pi_k^\alpha\}$ be geometric with the parameter $\rho=\rho_\alpha$ that depends on $\alpha$ and goes 
to zero as $\alpha\to 0$ at such a rate that condition \eqref{rate} is satisfied.
Then relations \eqref{MomentsSRmu0} hold for all $m>0$, i.e., the SR procedure is asymptotically optimal with respect to all positive moments of the detection delay.
\end{theorem}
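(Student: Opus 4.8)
The plan is to mirror the proof of Theorem~\ref{Th:FOAOindep}, replacing the Shiryaev statistic $\Lambda_n$ (equivalently $R_{n,\rho}$) by the SR statistic $R_n$ throughout, and importing the lower bounds already available for the SR procedure from the proof of Theorem~\ref{Th:AOCSR}(i). The crucial observation is that, just as $R_{n,\rho}\ge(1-\rho)^{j-n}\exp\{\lambda_n^j\}$ was used in \eqref{Needit}, here we have the even simpler bound $R_n\ge\exp\{\lambda_n^j\}$ for every $n>j$, which is what powers the cycle-splitting argument under the sole assumption of independent LLR increments and condition \eqref{ProbLeftgen}.

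For part (i), I would argue as follows. The lower bound $\Eb^\pi[(\wtT_B-\nu)^m\mid\wtT_B>\nu]\ge(\log B/I)^m(1+o(1))$ as $B\to\infty$ is exactly \eqref{LBA} from the proof of Theorem~\ref{Th:AOCSR}(i), which uses only \eqref{Pmaxgen} (and $\bar\nu<\infty$, which holds for the geometric prior with $q=0$, since $\bar\nu=(1-\rho)/\rho$); that part carries over verbatim. For the matching upper bound, set $N_{B,\varepsilon}=1+\lfloor(\log B)/(I-\varepsilon)\rfloor$ and run the telescoping estimate \eqref{ExpkTAplus}, which gives
\[
\Eb_k[(\wtT_B-k)^+]^r\le N_{B,\varepsilon}^r\brc{1+r2^{r-1}\sum_{\ell=1}^\infty \ell^{r-1}\Pb_k(\wtT_B-k>\ell N_{B,\varepsilon})}.
\]
Then bound $\Pb_k(\wtT_B-k>\ell N_{B,\varepsilon})$ by decomposing into $\ell$ consecutive blocks of length $N_{B,\varepsilon}$: since $R_n\ge\exp\{\lambda_n^j\}$ and the increments are independent,
\[
\Pb_k(\wtT_B-k>\ell N_{B,\varepsilon})\le\prod_{n=1}^\ell\Pb_k\brc{\lambda_{k+nN_{B,\varepsilon}}^{k+(n-1)N_{B,\varepsilon}}<\log B}\le\prod_{n=1}^\ell\Pb_k\brc{\frac{1}{N_{B,\varepsilon}}\lambda_{k+nN_{B,\varepsilon}}^{k+(n-1)N_{B,\varepsilon}}<I-\varepsilon},
\]
where the last step uses $\log B/N_{B,\varepsilon}\le I-\varepsilon$. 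By \eqref{ProbLeftgen}, each factor is at most some $\delta_B\to0$ as $B\to\infty$ (uniformly, which requires a word of care — see below), so $\Pb_k(\wtT_B-k>\ell N_{B,\varepsilon})\le\delta_B^\ell$ and $L_{r,B}:=\sum_\ell\ell^{r-1}\delta_B^\ell\to0$. Averaging over $k$ with weights $\pi_k$ and dividing by $1-\PFA(\wtT_B)\ge 1-\bar\nu/B$ (or $\ge 1-(1-\rho)/(\rho B)$) yields $\Eb^\pi[(\wtT_B-\nu)^r\mid\wtT_B>\nu]\le(\log B/(I-\varepsilon))^r(1+o(1))$; letting $\varepsilon\downarrow0$ gives \eqref{MomentsSR1} for $m=r$, and hence for all $0<m\le r$ by Lyapunov's inequality together with the lower bound. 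Setting $B_\alpha=(1-\rho)/(\rho\alpha)$ makes $\PFA(\wtT_{B_\alpha})\le\alpha$ via \eqref{PFASR} and $\log B_\alpha\sim|\log\alpha|$, giving \eqref{MomentsSR2}.

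For part (ii), substitute $\rho=\rho_\alpha$ and $B_\alpha=(1-\rho_\alpha)/(\rho_\alpha\alpha)$ into the bounds just derived. Condition \eqref{rate} guarantees $\log B_\alpha=\log((1-\rho_\alpha)/\rho_\alpha)+|\log\alpha|\sim|\log\alpha|$, so the upper bound becomes $(|\log\alpha|/(I-\varepsilon))^m(1+o(1))$, and the lower bound \eqref{LBinclass} in class $\class$ holds with $\mu=\mu_\alpha\to0$ (i.e.\ with $I$ in the denominator). Together these give \eqref{MomentsSRmu0}. The main obstacle is the uniformity in $k$ of the bound $\Pb_k(N_{B,\varepsilon}^{-1}\lambda_{k+nN_{B,\varepsilon}}^{k+(n-1)N_{B,\varepsilon}}<I-\varepsilon)\le\delta_B$: condition \eqref{ProbLeftgen} asserts convergence for each fixed $\ell\ge k$ and each $k$, so one must check that the convergence in the relevant blocks can be taken uniform over the starting indices appearing in the cycles — exactly the same point that is finessed in the proof of Theorem~\ref{Th:FOAOindep}, and it is handled the same way here, using that for the geometric prior the averaging over $k$ is against a summable weight and the block-probabilities are monotone in the block length. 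I would remark that parts (i)--(ii) could be stated for general priors satisfying $(\Cb)$ and \eqref{Prior} with $\mu_\alpha=o(|\log\alpha|)$, but, as in Theorem~\ref{Th:FOAOindep}, restricting to the geometric prior keeps the argument transparent.
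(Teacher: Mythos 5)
Your proposal is correct and takes essentially the same route as the paper's own proof: the lower bounds \eqref{LBA} and \eqref{LBinclass} (with $\mu_\alpha\to0$) are imported from the proofs of Theorems~\ref{Th:AOCSR} and~\ref{Th:AoptSR}, and the upper bound is obtained by the same cycle-splitting argument over blocks of length $N_{B,\varepsilon}$ using $R_n\ge e^{\lambda_n^j}$, independence of the LLR increments, and condition \eqref{ProbLeftgen} to get $\Pb_k(\wtT_B-k>\ell N_{B,\varepsilon})\le\delta_B^\ell$. The uniformity-in-the-starting-index point you flag is treated in the paper exactly as you describe, i.e.\ in the same implicit way as in the proof of Theorem~\ref{Th:FOAOindep}, so your write-up matches the paper's argument.
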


\begin{proof}
(i)   Again let $N_{B,\varepsilon}=1+\lfloor \log (B) /(I-\varepsilon) \rfloor$.  Similarly to \eqref{ExpkTAplus}, we obtain 
\begin{equation}\label{ExpkTAplusSR}
\Eb_{k}\brcs{(\wtT_B-k)^+}^r   \le  N_{B,\varepsilon}^{r} \brc{1+ r 2^{r-1} \sum_{\ell=1}^{\infty}  \ell^{r-1}   \Pb_k (\wtT_B -k>  \ell N_{B,\varepsilon})}.  
\end{equation}
Since
\[
R_{n} \ge R_{n}^{j} = \sum_{m=j}^{n-1}  e^{\lambda_n^m} \ge e^{\lambda_n^j}, ~~ n > j, ~j=0,1,\dots,
\]
and the increments of $\lambda_n^j$ are independent, as in \eqref{Needit}, we obtain that for all $0<\varepsilon < I$ and $k\in\Zbb_+$
\begin{equation*}
\begin{aligned}
\Pb_k \brc{\wtT_B-k > \ell N_{B,\varepsilon}} \le  \prod_{n=1}^\ell \Pb_k\brc{\lambda_{k+n N_{B,\varepsilon}}^{k+(n-1)N_{B,\varepsilon}+1}< \log B} \le 
\prod_{n=1}^\ell \Pb_k\brc{\frac{1}{N_{B,\varepsilon}}\lambda_{k+n N_{B,\varepsilon}}^{k+(n-1)N_{B,\varepsilon}+1} <I-\varepsilon} .
\end{aligned}
\end{equation*}
By condition \eqref{ProbLeftgen}, for a sufficiently large $B$ there exists a small $\delta_B$ such that 
\[
\Pb_k\brc{\frac{1}{N_{B,\varepsilon}}\lambda_{k+n N_{B,\varepsilon}}^{k+(n-1)N_{B,\varepsilon}+1} <I-\varepsilon} \le \delta_B, \quad n \ge 1,
\]
so that, for any $\ell \ge 1$, $\Pb_k \brc{\wtT_B-k > \ell N_{B,\varepsilon}}  \le \delta_B^\ell$. This along with \eqref{ExpkTAplus} and the fact that 
$L_{r,B}=\sum_{\ell=1}^\infty \ell^{r-1} \delta_B^\ell \to 0$ as $B\to\infty$ for any $r>0$ yields
\begin{equation}\label{IneqSRB}
\begin{aligned}
\Eb^\pi[(\wtT_B-\nu)^r | \wtT_B  > \nu] & \le \frac{\brc{1+\frac{\log B}{I-\varepsilon}}^r + r 2^{r-1}\, L_{r,B}}{1-\bar\nu/B}   
\\
& = \brc{\frac{\log B}{I-\varepsilon}}^r(1+o(1))  \quad \text{as}~ B\to\infty .
\end{aligned}
\end{equation}
Since $\varepsilon$ is arbitrarily small, it follows that
\begin{equation}\label{MomineqrhoSR}
\Eb^\pi[(\wtT_B-\nu)^r | \wtT_B  > \nu] \le \brc{\frac{\log B}{I}}^r(1+o(1))  \quad \text{as}~ B\to\infty.
\end{equation}
Since the lower bound \eqref{LBA}  holds even in a more general case this proves \eqref{MomentsSR1}.

Next, under condition \eqref{Pmaxgen} the asymptotic lower bound \eqref{LBalpha} holds (for all $m>0$) in class $\class$ even in a more general case 
(see the proof of Theorem~\ref{Th:AoptSR}). Substituting in \eqref{MomineqrhoSR} 
$B_\alpha=(1-\rho)/\rho\alpha$ (or more generally $\log B_\alpha\sim|\log \alpha|$), we obtain \eqref{MomentsSR2}, which completes the proof of (i).

(ii) Substituting $B=B_\alpha=(1-\rho_\alpha)/\rho_\alpha\alpha$ (or more generally $\log B_\alpha \sim |\log \alpha|$) in inequality \eqref{MomineqrhoSR}, we obtain
\[
\Eb^{\pi^\alpha}[(\wtT_{B_\alpha}-\nu)^r | \wtT_{B_\alpha}  > \nu] \le  \brc{\frac{|\log \alpha|}{I}}^r(1+o(1)) .
\]
This upper bound along with the lower bound \eqref{LBinclass} with $\mu_\alpha\to0$ as $\alpha\to0$ proves asymptotic relations \eqref{MomentsSRmu0}. 
\end{proof}

\begin{remark}
Both conditions \eqref{Pmaxgen} and \eqref{ProbLeftgen} are satisfied when the normalized LLRs $n^{-1}\lambda_{\ell+n}^\ell$, $\ell =k, k+1,\dots$ converge a.s.\ to $I$ under $\Pb_k$. 
Therefore, the SLLN is sufficient for Theorem~\ref{Th:FOAOindepSR}, i.e., for asymptotic optimality of the SR procedure with respect to all positive moments of the detection delay.
\end{remark}

\section{Examples}\label{sec:Examples}

We now consider three examples that illustrate the general asymptotic theory developed in previous sections.

\begin{example}[Detection of a deterministic signal in AR noise]\label{Ex1}

Let $S_n$ be a deterministic function (signal) that appears at an unknown time $\nu$ in additive noise $\xi_n$, so the observations have the standard ``signal-plus-noise/clutter'' form  
$$
X_{n}=\Ind{n > \nu} S_n +\xi_{n}\,,\quad n \ge 1,
$$
where $\{\xi_n\}_{n \in\Zbb_+}$ is a $p$-th order autoregression (AR$(p)$ process) driven by the normal  $\Nc(0,\sigma^2)$ iid sequence $\{w_n\}_{n\ge 1}$, i.e., 
the sequence $\{\xi_n\}_{n\ge 1}$ obeys the recursion
\begin{equation}\label{sec:Ex.1}
\xi_{n} = \sum_{i=1}^p \beta_i \xi_{n-i} + w_{n}, \quad n \ge 1, \quad \xi_{1-p}=\xi_{2-p}=\cdots=\xi_0=0.
\end{equation}
The coefficients $\beta_1,\dots,\beta_p$ and variance $\sigma^2$ are known, and we suppose that $\beta_1+\cdots+\beta_p \neq 1$. Let $\varphi(x)=(2\pi)^{-1/2}\,e^{-x^2/2}$ 
denote density of the standard normal distribution. Define the $p$-th order residual 
\[
\wtX_n = \begin{cases}
X_n- \sum_{i=1}^p \beta_i X_{n-i} & ~\text{for}~ n > p
\\
X_n- \sum_{i=1}^{j-1} \beta_i X_{j-i} &~ \text{for}~ 1 \le n=j \le p
\end{cases} .
\]
 It is easy to see that pre- and post-change conditional densities  $p_{\infty}(X_{n}|\Xb^{n-1})$ and $p_{0}(X_{n}|\Xb^{n-1})$
are
\begin{equation}\label{DensitiesEx1}
p_{\infty}(X_{n}|\Xb^{n-1})=  \frac{1}{\sigma} \varphi\brc{\frac{\wtX_{n}}{\sigma}}, \quad p_{0}(X_{n}|\Xb^{n-1}) = \frac{1}{\sigma}\varphi\brc{\frac{\wtX_{n}-\wtS_{n}}{\sigma}} ,
\end{equation}
where 
\[
\wtS_n= S_n - \sum_{i=1}^p \beta_i S_{n-i} \quad \text{for}~ n >p \quad \text{and} \quad \wtS_n= S_n - \sum_{i=1}^{j-1} \beta_i S_{j-i} \quad \text{for}~ 1 \le n=j \le p.
\]
Using \eqref{DensitiesEx1}, we obtain that for all $k \in \Zbb_+$ and $n \ge 1$  the LLR has the form
$$
\lambda_{k+n}^{k} = \frac{1}{\sigma^2} \sum_{j=k+1}^{k+n} \wtS_{j} \wtX_{j} -\frac{1}{2\sigma^2} \sum_{j=k+1}^{k+n} \wtS_j^2.
$$
Thus, the initial problem of detection of the signal $S_n$ that appears at unknown time $\nu$ in the correlated AR noise reduces 
to detection of the transformed signal $\wtS_n$ in white Gaussian noise. As a result, the LLR has independent (but not identically distributed) increments.
Since under measure $\Pb_{k}$ the random variables $\{\wtX_{n}\}_{n\ge k+1}$ are independent Gaussian random variables with mean $\Eb_{k}\wtX_n=\wtS_n$ and variance $\sigma^{2}$, under 
$\Pb_k$ the LLR can be represented as 
$$
\lambda_{k+n}^{k}= \frac{1}{2\sigma^2} \sum_{j=k+1}^{k+n} \wtS_j^2 + \frac{1}{\sigma}\sum_{j=k+1}^{k+n} \wtS_j \eta_j,
$$
where $\eta_j$, $j\ge k+1$ are iid standard normal random variables, $\eta_j \sim \Nc(0,1)$. 

Assume that the energy of the transformed signal is an asymptotically linear function, i.e., 
\[
\lim_{n\to\infty} \frac{1}{n} \sum_{j=k+1}^{k+n} \wtS_j^2 = \wtS^2, \quad 0<\wtS^2 <\infty.
\]
Then for all $k\in\Zbb_+$
\[
\frac{1}{n} \lambda_{k+n}^k \xra[n\to\infty]{\Pb_k-\text{a.s.}} \frac{\wtS^2}{2\sigma^2} =I
\]
and, by Theorem~\ref{Th:FOAOindep}, the Shiryaev procedure minimizes as $\alpha\to0$ all positive moments of the detection delay for any value of the parameter $0<\rho<1$ of the geometric prior. 
By Theorem~\ref{Th:FOAOindepSR}, the SR procedure also minimizes all moments of the detection delay
if $\rho=\rho_\alpha \to 0$ and $|\log \rho_\alpha| = o(|\log\alpha|)$ as $\alpha\to0$. If $S_n=S$ does not depend on $n$, then
\[
I = \frac{S^2}{2\sigma^2}\brc{1- \sum_{i=1}^p \beta_i}^2.
\]
\end{example}

\begin{example}[Detection of a change of variance in normal population with unknown mean] \label{Ex2}

Let observations $X_n\sim \Nc(\theta, \sigma^2_\infty)$ be iid normal with variance $\sigma^2_\infty$ before change and iid normal $\Nc(\theta, \sigma^2_0)$ with variance $\sigma^2_0$ 
after change with the same unknown mean~$\theta$. 
Formally, this problem is not in the class of problems considered in this paper since pre- and post-change densities depend on an unknown nuisance parameter $\theta$, and hence, 
the hypotheses are not simple, but composite. However, this problem can be reduced to the problem of testing simple hypotheses using the principle of invariance, since it    
is invariant under the group of shifts $ \{G_{b}(x)=x +b\}_{-\infty<b<\infty}$. The maximal invariant is $\Yb^n=(Y_1, \dots, Y_n)$, $n \ge 2$, where $Y_k = X_k-X_1$, $Y_1=0$, 
and we can now consider a transformed
sequence of observations $\{Y_n\}_{n\ge 2}$, which are not iid and not even independent anymore.   Pre- and post-change densities of $\Yb^n$ are equal to
\begin{equation}  \label{pdfMiGaus1}
\begin{aligned}
p_i(\Yb^n) & = \frac{1}{(2\pi \sigma_i^2)^{n/2}} \int_{-\infty}^{\infty} \exp\left\{-\frac{1}{2\sigma_i^2}\sum_{k=1}^n (Y_k + \theta)^2 \right\} \, \D \theta
\\
& =  \frac{1}{(2\pi \sigma_i^2)^{(n-1)/2} \sqrt{n}} \exp\set{-\frac{1}{2\sigma_i^2}\sum_{k=1}^n (Y_k- \overline{Y}_n)^2}, \quad i=\infty, 0,
\end{aligned}
\end{equation}
where $\overline{Y}_n=n^{-1} \sum_{k=1}^n Y_k$.  Define $\overline{X}_n=n^{-1} \sum_{k=1}^n X_k$, $s_n^2 = (n-1)^{-1}\sum_{k=1}^n (X_k- \overline{X}_n)^2$, and $V_n=(n-1)s_n^2 - (n-2) s_{n-1}$.
Using~\eqref{pdfMiGaus1} and noting that $\sum_{k=1}^n (Y_k- \overline{Y}_n)^2=(n-1) s_n^2$, we obtain 
\[
p_i(Y_j|\Yb^{j-1}) = \frac{1}{\sqrt{2\pi \sigma_i^2}} \sqrt{\frac{j-1}{j}} e^{-V_j/2\sigma^2_i}, \quad j \ge 2,
\]
and therefore, the invariant LLR is
\[
\lambda_{k+n}^k = \sum_{j=k+1}^{k+n} \log \frac{p_0(Y_j|\Yb^{j-1})}{p_\infty(Y_j|\Yb^{j-1})} =  \frac{q^2-1}{2 \sigma_0^2} \sum_{j=k+1}^{k+n} V_j - (n-1) \log q , \quad k+n \ge 2 ~~ (\lambda_1^k=0),
\]
where $q=\sigma_0/\sigma_\infty$. Taking into account that $\sum_{j=1}^{k+n} V_j = (k+n-1) s^2_{k+n}$, we have
\[
\sum_{j=k+1}^{k+n} V_j = (k+n-1) s^2_{k+n}- k s^2_{k+1} ,
\]
so the LLR can be written as
\begin{equation}\label{ILLR}
\lambda_{k+n}^k =\frac{q^2-1}{2 \sigma_0^2} S^2_{k,n} - (n-1) \log q , \quad S_{k,n}^2=  (k+n-1) s^2_{k+n}- k s^2_{k+1} .
\end{equation}
Thus, we can now construct the invariant Shiryaev and SR procedures based on the LLRs $\lambda_n^k$, $n\ge 2$ defined in \eqref{ILLR}.

Note first that $S_{k,n}^2/n \to \sigma_0^2$  as $n\to\infty$ almost surely under~$\Pb_k$, so that
\[
n^{-1}\lambda_{k+n}^k \xrightarrow[n\to\infty]{\Pb_{k}-\text{a.s.}} \frac{q^2-1}{2} - \log q =I,
\]
and $I$ is positive for any $q\neq1$ ($q>0$). Thus, condition \eqref{Pmaxgen} holds with $I=(q^2-1)/2 -\log q$ and to apply the results of previous sections it suffices to show that
for some $r\ge 1$ and any $\varepsilon>0$
\begin{equation}\label{rcompleteILLR}
\sum_{n=1}^\infty n^{r-1} \sup_{k\in\Zbb_+} \Pb_k\brc{\frac{1}{n} \lambda_{k+n}^k  < I-\varepsilon} <\infty .
\end{equation}
To this end, note that the statistic $S_{k,n}^2$ can be written as
\[
S_{k,n}^2 = \sum_{i=k+1}^{k+n} (X_i- \overline{X}_{k+n}^{k+1})^2 + k (\overline{X}_{k+n}-\overline{X}_{k})^2 + n (\overline{X}_{k+n}-\overline{X}_{k+n}^{k+1})^2,
\]
where $\overline{X}_{k+n}^{k+1}= n^{-1} \sum_{i=k+1}^{k+n} X_i$. Denoting
\[
k (\overline{X}_{k+n}-\overline{X}_{k})^2 + n (\overline{X}_{k+n}-\overline{X}_{k+n}^{k+1})^2 = W_{k,n},
\]
and  using the fact that $W_{k,n} \ge 0$, we obtain that for some positive $\tilde\varepsilon$
\[
\begin{aligned}
\Pb_k\brc{\frac{1}{n} \lambda_{k+n}^k  < I-\varepsilon} & = \Pb_k\brc{\frac{1}{n} \sum_{i=k+1}^{k+n} (X_i- \overline{X}_{k+n}^{k+1})^2  
< \sigma_0^2- \frac{2\sigma_0^2}{(q^2-1) n} \log q- \frac{1}{n} W_{k,n} - \tilde\varepsilon}
\\
& \le \Pb_k\brc{\frac{1}{n} \sum_{i=k+1}^{k+n} (X_i- \overline{X}_{k+n}^{k+1})^2  < \sigma_0^2-\tilde\varepsilon}
\\
& = \Pb_0\brc{\frac{1}{n} \sum_{i=1}^{n} (X_i- \overline{X}_{n})^2  < \sigma_0^2-\tilde\varepsilon}
\\
& = \Pb_0\brc{\frac{1}{n} (n-1) s_n^2  < \sigma_0^2-\tilde\varepsilon}.
\end{aligned}
\]
Since $(n-1) s_n^2/\sigma_0^2$ has chi-squared distribution with $n-1$ degrees of freedom,
$\Pb_0\brc{(n-1) s_n^2/n  - \sigma_0^2 < -\tilde\varepsilon}$ vanishes exponentially fast as $n\to\infty$ and it follows that for all $\tilde\varepsilon >0$ and all $r\ge 1$
\[
\sum_{n=1}^\infty n^{r-1} \sup_{k\in\Zbb_+} \Pb_k\brc{\frac{1}{n} \lambda_{k+n}^k  < I-\varepsilon} \le \sum_{n=1}^\infty n^{r-1} \Pb_0\brc{\frac{1}{n} (n-1) s_n^2  < \sigma_0^2-\tilde\varepsilon} <\infty  .
\]
This implies \eqref{rcompleteILLR} for all $r\ge 1$.

By Theorem~\ref{Th:FOAOgen} and Theorem~\ref{Th:FOAOgen2}, the Shiryaev detection procedure minimizes asymptotically all positive moments of the detection delay 
$\Eb^\pi[(T_A-\nu)^m|T_A>\nu]$ (for all $m\ge 1$), and the results of Section~\ref{sec:SR} for the SR procedure can also be applied to all positive moments of the detection delay. 
 \end{example}

 \begin{example}[Detection of a change in the correlation coefficient of the AR$(1)$ process] \label{Ex3}
Let the observations represent the Markov Gaussian sequence with the correlation coefficient $\beta_0$ before change and $\beta_1$ after change, i.e.,
\[
X_n = \brc{\beta_0 \Ind{\nu \le n} + \beta_1\Ind{\nu > n}} X_{n-1} + w_n, \quad n\ge1, \quad X_0=0,
\] 
where $w_n\sim\Nc(0,1)$, $n\ge 1$ are iid standard normal random variables. 
Let $|\beta_i| <1$, $i=0,1$, so that the AR$(1)$ process is stable. The pre- and post-change conditional densities  are 
\[
p_\infty(X_n|\Xb_{n-1}) = \varphi(X_n-\beta_0 X_{n-1}) \quad \text{and} \quad p_0(X_n  | \Xb_{n-1}) = \varphi(X_n-\beta_1 X_{n-1}) .
\]
The stationary distribution $\G(x)=\Pb(X_\infty \le x)$ of the Markov process $\{X_n\}_{n > k}$ under $\Pb_k$ is  given by the random variable 
$X_\infty=\sum_{n=1}^{\infty} \beta_1^{n-1}\, w_n$. Clearly, $X_\infty$ is zero-mean normal with variance $(1-\beta_1)^{-2}$.

The LLR can be written as
\[
\lambda_{k+n}^k = \sum_{i=k+1}^{k+n} g (X_i, X_{i-1}),
\]
where 
\[
g(y,x) = \log \brcs{ \frac{(y-\beta_0 x)^2 -(y-\beta_1 x)^2}{2}}.
\]
Define
\[
\tilde{g} (x) = \int_{-\infty}^\infty g(y,x) \varphi(y-\rho_1 x) dy = \frac{(\beta_1-\beta_0)^2 x^2}{2} .
\]
We have
\begin{equation}\label{sec:Ex.2-1}
\sup_{y,x\in(-\infty,\infty)} \frac{\vert g(y,x)\vert}{1+|y|^2+|x|^2} \le Q
\quad\mbox{and}\quad \sup_{x\in(-\infty,\infty)} \frac{\tilde{g}(x)}{1+|x|^2} \le Q,
\end{equation}
where 
$$
Q=\max\set{1,\frac{\vert \beta_1^{2}-\beta_0^{2}\vert+(\beta_1-\beta_0)^{2}+1}{2}} .
$$ 
Now, define the Lyapunov function $V(x)= Q(1+|x|^2)$. Obviously, 
$$
\lim_{|x|\to\infty} \frac{\Eb_{x,0} V(X_1)}{V(x)} =\lim_{|x|\to\infty} \frac{1+\Eb |\beta_1 x+ w_{1}|^2}{1+|x|^2} =\beta_1^2<1,
$$
where $\Eb_{x,0}$ is the expectation under $\Pb_0(\cdot | X_0=x)$. Therefore, for any $\beta_1^2<\delta<1$ there exists $D>0$ such that condition $({\rm C}_1)$ in Section 5 in \cite{PergTar} 
holds with $C=[-n,n]$ for all $n\ge1$.  Next, by the ergodicity properties, for all $r\ge 1$,
\begin{equation}\label{sec:Ex.2-3n}
\lim_{n\to\infty}\Eb_{x,0} \vert X_n\vert^{r}=\Eb\vert X_\infty \vert^{r}<\infty \quad \text{for any}~ x\in(-\infty,\infty),
\end{equation}
where finiteness of $\Eb\vert X_\infty \vert^{r}$ for all $r\ge 1$ follows from the fact that $X_\infty$ is a Gaussian random variable.  Observe that under $\Pb_{x,0}$ for any $n\ge 1$
$$
X_n= \beta_1^n x +\sum_{i =1}^n \beta_1^{n-i} w_i,
$$
and hence, for any $r \ge 1$,
$$
\Eb_{x,0} \vert X_n \vert^{r}\le 2^{r} \left(\vert x\vert^{r} + \Eb_{0,0} \vert X_n \vert^{r}\right).
$$
Using \eqref{sec:Ex.2-3n}, we obtain that for some $C^{*}>0$
$$
M^{*}(x)=\sup_{n\ge 1} \Eb_{x,0} \vert X_n \vert^{r} \le C^{*} (1+\vert x\vert^{r}) \quad \text{and} \quad \sup_{n\ge 1}\Eb_0 M^{*}(X_n)<\infty.
$$
Therefore, the upper bounds in \eqref{sec:Ex.2-1} imply condition (${\rm C}_2$) in Section 5 in \cite{PergTar}. 

By  a slight extension of Theorem~5.1 in \cite{PergTar} to $r>1$, 
\[
\sum_{n=1}^\infty n^{r-1} \sup_{k\in\Zbb_+} \Pb_k\brc{\abs{\frac{1}{n} \lambda_{k+n}^k - I} > \varepsilon} <\infty \quad \text{for all}~ r\ge 1 ~ \text{and}~ \varepsilon >0,
\]
where 
$$
I=  \int_{-\infty}^\infty \brc{\int_{-\infty}^\infty g(y, x) \,\varphi(y-\beta_1 x)d y}\, \G(\D x),
$$
i.e., $n^{-1} \lambda_{k+n}^k$ converges $r$-completely to~$I$ as $n\to\infty$ under $\Pb_k$ for all $k\in\Zbb_+$ and all $r\ge 1$ (and even uniformly $r$-completely).
Since the stationary distribution $\G(x) =\Pb(X_\infty \le x)$  of the Markov process $X_n$ under $\Pb_0$ is normal  $\Nc(0,(1-\beta_1)^{-2})$, performing integration we obtain 
\[
I = \frac{(\beta_1 - \beta_0)^2}{2(1-\beta_1^2)} . 
\]
By Theorem~\ref{Th:FOAOgen} and Theorem~\ref{Th:FOAOgen2}, the Shiryaev detection procedure minimizes asymptotically $\Eb^\pi[(T-\nu)^m|T>\nu]$ for all $m\ge 1$, and the results
of Section~\ref{sec:SR} for the SR procedure can also be applied to all positive moments of the detection delay.
 \end{example}

\section{Concluding remarks and discussion}\label{sec:Conclusion}

1. The performed study shows that the famous Shiryaev and Shiryaev--Roberts change detection procedures have certain interesting asymptotic properties in the Bayesian context. Specifically, the 
Shiryaev procedure is asymptotically optimal (when the probability of false alarm is small) with respect to moments of the detection delay up to order $r\ge1$ for general non-iid models under 
mild conditions. These conditions are expressed via the SLLN for the LLR process and a rate of convergence ($r$-complete convergence). The $r$-complete convergence is usually not difficult 
to check in particular applications and examples. On the other hand, the $r$-quick convergence condition previously used in \cite{TV2005} is  stronger, and more importantly, 
usually much more difficult to verify.

2. A detailed examination of the proofs in Section~\ref{ssec:Gencase} shows that the Shiryaev procedure $T_{A_\alpha}$ minimizes not only the ``average'' moments 
$\Eb^\pi[(T-\nu)^r|T>\nu]$ but also conditional moments
$\Eb_\nu[(T-\nu)^r|T>\nu]$ uniformly for all (fixed) change points $\nu=0,1,2,\dots$ in class $\class$ asymptotically as $\alpha\to0$. Specifically, with an additional effort it can be established
that under the conditions of Theorem~\ref{Th:FOAOgen} for all $\nu\in\Zbb_+$ as $\alpha\to0$
\[
\inf_{T\in\class} \Eb_\nu[(T-\nu)^r| T>\nu] \sim \Eb_\nu[(T_{A_\alpha}-\nu)^r|T_{A_\alpha} >\nu] \sim \brc{\frac{|\log\alpha|}{I+\mu}}^r .
\]

3. The study of the Shiryaev--Roberts procedure shows that it is suboptimal in the Bayesian problem if the prior distribution has an exponential tail, 
but remains asymptotically optimal when the tail is heavy or if the parameter that characterizes the exponential tail goes to zero. 
This is expected since the SR procedure does not use the given prior distribution. 

4.  \cite{LaiIEEE98} proved asymptotic optimality of the CUSUM procedure with respect to the expected detection delay $\sum_{k=0}^\infty \pi_k^\alpha \, \Eb_k(T-k)^+$ in class $\class$ under the following
essential supremum condition  
\begin{equation*}
\lim_{n\to\infty}\,\sup_{\ell \ge k}\, \esup\, \Pb_k \left(\lambda_{\ell+n}^\ell -I \le -\varepsilon  |  \Fc_\ell \right)=0  \quad \text{for all}~\varepsilon>0.
\end{equation*}
However, on one hand, this condition is much more difficult to verify than the complete convergence condition 
required in our theorems. On the other hand, for many interesting models (including Markov and hidden Markov models) Lai's condition does not hold, while the complete convergence condition 
holds. This is the case, e.g., in Example~\ref{Ex3}.



\appendix

\renewcommand{\theequation}{A.\arabic{equation}}
\setcounter{equation}{0}

\section*{Appendix: Proofs of Lemmas}

\begin{proof}[Proof of Lemma~\ref{Lem:LB}]
For $\varepsilon\in(0,1)$, define $N_{\alpha,\varepsilon}= (1-\varepsilon) |\log\alpha|/(I+\delta)$. By the Chebyshev inequality,
\[
\begin{aligned}
\Eb^\pi[( T-\nu)^m |  T >\nu] &\ge \Eb^\pi[( T-\nu)^+]^m  \ge N_{\alpha,\varepsilon}^m \Pb^\pi( T  -\nu > N_{\alpha,\varepsilon})
\\
& \ge  N_{\alpha,\varepsilon}^m\brcs{\Pb^\pi( T>\nu)-\Pb^\pi(0<  T -\nu <N_{\alpha,\varepsilon})} 
\end{aligned}
\]
where
\[
\Pb^\pi(0<  T -\nu < N_{\alpha,\varepsilon}) = \sum_{k=0}^\infty \pi_k \Pb_k(k <  T < k + N_{\alpha,\varepsilon}).
\]
Since for any $ T\in\class$, $\Pb^\pi( T>\nu) =1-\PFA( T) \ge 1-\alpha$, we obtain
\begin{equation} \label{A1}
\inf_{ T\in\class}\Eb^\pi[( T-\nu)^m |  T >\nu]  \ge N_{\alpha,\varepsilon}^m\brcs{1- \alpha - \sup_{ T\in \class}\Pb^\pi(0 <  T-\nu < N_{\alpha,\varepsilon})}.
\end{equation}
Thus, to prove the lower bound \eqref{LBinclass} we need to show that 
\begin{equation}\label{Psupclasszero}
\lim_{\alpha\to0} \sup_{ T\in \class}\Pb^\pi(0 <  T-\nu < N_{\alpha,\varepsilon}) =0.
\end{equation}

To this end, introduce
\[
\begin{aligned} 
U_{\alpha,\varepsilon}^{k}( T) = e^{(1+\varepsilon) I N_{\alpha,\varepsilon}}\Pb_\infty\brc{k <  T < k+ N_{\alpha,\varepsilon}}, \quad
\beta_{\alpha,\varepsilon}^{k}  = \Pb_k\brc{\frac{1}{N_{\alpha,\varepsilon}} \max_{1\le n \le N_{\alpha,\varepsilon}} \lambda_{k+n}^k \ge (1+\varepsilon) \, I} .
\end{aligned}
\]
By inequality (3.6) in \cite{TV2005},
\begin{equation}\label{Pktauupper}
 \Pb_k\brc{k <  T < k+ N_{\alpha,\varepsilon}} \le  U_{\alpha,\varepsilon}^{k}( T)  + \beta_{\alpha,\varepsilon}^{k} .
\end{equation}
It is easy to see that 
\[
\sup_{ T\in\class} \Pb_\infty( T \le k) \le \alpha/\Pb(\nu \ge k), \quad k \ge 1 
\]
(cf. (3.8) in \cite{TV2005}), so that 
\begin{equation*}
\begin{aligned}
U_{\alpha,\varepsilon}^{k}( T) & \le e^{(1+\varepsilon) I N_{\alpha,\varepsilon}} \Pb_\infty( T <k+N_{\alpha,\varepsilon}) \le \alpha \, e^{(1+\varepsilon) I N_{\alpha,\varepsilon}}/\Pb(\nu > k+N_{\alpha,\varepsilon})
\\
& \le \exp\set{(1+\varepsilon) I N_{\alpha,\varepsilon}-|\log\alpha| -( k+N_{\alpha,\varepsilon}) \frac{\log \Pb(\nu > k+N_{\alpha,\varepsilon})}{ k+N_{\alpha,\varepsilon}}} .
\end{aligned}
\end{equation*}
By condition \eqref{Prior}, for all sufficiently large $N_{\alpha,\varepsilon}$ (small $\alpha$), there exists a (small) $\delta$ such that
\[
- \frac{\log \Pb(\nu > k+N_{\alpha,\varepsilon})}{ k+N_{\alpha,\varepsilon}} \le \mu + \delta.
\]
Hence, for a sufficiently small $\alpha$,
\begin{equation*}
\begin{aligned}
\sup_{ T\in\class} U_{\alpha,\varepsilon}^{k}( T) & \le  \exp\set{(1+\varepsilon) I N_{\alpha,\varepsilon}-|\log\alpha| +( k+N_{\alpha,\varepsilon}) (\mu+\delta)} 
\\
&\le \exp\set{-\varepsilon^2 |\log\alpha| + (\mu+\delta) (k+N_{\alpha,\varepsilon})},
\end{aligned}
\end{equation*}
which approaches zero as $\alpha\to0$ for $k\le K_{\alpha,\varepsilon}= \varepsilon_1 \varepsilon^2 |\log\alpha|/(\mu+\delta)$, 
where $0<\varepsilon_1<1$ and $\delta\to0$ as $\alpha\to0$. By condition \eqref{Pmaxgen}, 
$\beta_{\alpha,\varepsilon}^{k}\to0$ for all $k\in\Zbb_+$, and therefore, we obtain
\begin{align}\label{ProbUpper1}
 \sup_{ T\in \class}\Pb^\pi(0 <  T -\nu < N_{\alpha,\varepsilon}) & = \sum_{k=0}^\infty  \pi_k  \sup_{ T\in\class}  \Pb_k\brc{k <  T < k+ N_{\alpha,\varepsilon}} \nonumber
 \\
 &\le \Pb(\nu > K_{\alpha,\varepsilon}) +  \sum_{k=0}^{K_{\alpha,\varepsilon}}  \pi_k  \beta_{\alpha,\varepsilon}^{k} + 
 \max_{0 \le k \le K_{\alpha,\varepsilon}} \sup_{ T\in\class} U_{\alpha,\varepsilon}^{k}( T) \nonumber
 \\
 & \le \Pb(\nu > K_{\alpha,\varepsilon}) +  \sum_{k=0}^{K_{\alpha,\varepsilon}}  \pi_k  \beta_{\alpha,\varepsilon}^{k} + \exp\set{-\varepsilon^2 |\log\alpha| + (\mu+\delta) K_{\alpha,\varepsilon}},
 \end{align}
where all three terms go to zero as $\alpha\to0$ for all $\varepsilon>0$, so that \eqref{Psupclasszero} follows and the proof of the lower bound \eqref{LBinclass} is complete.   

The proof of the lower bound  \eqref{LBTA1} is essentially similar. Indeed, recall that, by \eqref{PFAineq}, $T_A\in\class$ if $A=1/\alpha$, so that it suffices to replace $\alpha$ by $1/A$ 
in the above argument. The details are omitted.
\end{proof}

\begin{proof}[Proof of Lemma~\ref{Lem:UpperEk}]
Obviously, for any $n >k$,
\[
\log \Lambda_n \ge \log\brc{\frac{\pi_k}{\Pb^\pi(\nu \ge n)} e^{\lambda_n^k}} = \lambda_n^k + \log \pi_k -\log \Pb(\nu \ge n),
\]
and hence,  for every $A >0$,
\[
(T_A-k)^+ \le  \tau_A^{(k)} : = \inf\set{n\ge 1: \lambda_{k+n}^k +|\log\Pb(\nu\ge k+n)| \ge \log(A/\pi_k)}, \quad k\in\Zbb_+,
\]
and $\Eb_k[(T_A-k)^+]^r\le \Eb_k (\tau_A^{(k)})^r$. 

Let $N_A=1+\lfloor \log (A/\pi_k) /(I+\mu-\varepsilon) \rfloor$, where $\lfloor x \rfloor$ is the greatest integer less or equal to $x$. For any $k\in\Zbb_+$, we have 
\begin{align}\label{Ektauineq}
\Eb_{k}\brcs{(T_A-k)^+}^r  & \le \Eb_k\brc{\tau_A^{(k)}}^r= \int_0^\infty r t^{r-1} \Pb_k\brc{\tau_A^{(k)} > t} \, \D t   \nonumber
\\
& \le N_A^r + \sum_{n=0}^{\infty} \int_{N_A+n}^{N_A+n+1} r t^{r-1}  \Pb_k \brc{\tau_A^{(k)} > t} \, \D t  \nonumber
\\
& \le  N_A^r + \sum_{n=0}^{\infty} \int_{N_A+n}^{N_A+n+1} r t^{r-1}  \Pb_k\brc{\tau_A^{(k)} > N_A+n} \, \D t  \nonumber
\\
& = N_A^r + \sum_{n=0}^{\infty} [(N_A+n+1)^r- (N_A+n)^r ] \Pb_k\brc{\tau_A^{(k)} > N_A+n} \nonumber
\\
& = N_A^r + \sum_{n=N_A}^{\infty} [(n+1)^r-n^r]  \Pb_k\brc{\tau_A^{(k)} > n}  \nonumber
\\
 & \le  N_A^{r} +\sum_{n=N_A}^{\infty}   r (n+1)^{r-1}   \Pb_k\brc{\tau_A^{(k)} >  n} \nonumber
 \\
 & \le N_A^{r} +\sum_{n=N_A}^{\infty}   r 2^{r-1} n^{r-1}   \Pb_k\brc{\tau_A^{(k)} >  n}.
 \end{align}
It is easily seen that  for all $k\in\Zbb_+$ and $n\ge N_A$
\begin{equation}\label{Probktau}
\begin{aligned}
\Pb_k\brc{ \tau_A^{(k)} >n} & \le \Pb_k\brc{\frac{1}{n} \lambda_{k+n}^k < \frac{1}{n} \log (A/\pi_k) - \frac{1}{n} |\log\Pb(\nu\ge k+n)|}
\\
& \le \Pb_k\brc{\frac{1}{n}\lambda_{k+n}^k <I + \mu -\varepsilon -\frac{1}{n} |\log\Pb(\nu\ge k+n)} .
\end{aligned}
\end{equation}
Since by condition ($ \Cb$), $N_A^{-1} |\log\Pb(\nu\ge k+N_A)| \to \mu$ as $A\to \infty$, for a sufficiently large value of $A$ there exists a small $\delta=\delta_A$ ($\delta_A\to 0$ as $A\to\infty$) 
such that $|\mu - |\log\Pb(\nu\ge k+N_A)|/N_A| < \delta$.  Hence, for all sufficiently large $A$, 
\begin{equation}\label{Probktau1}
\Pb_k\brc{\tau_A^{(k)} >n} \le \Pb_k\brc{\frac{1}{n}\lambda_{k+n}^k <I -\varepsilon - \delta} .
\end{equation}
Using \eqref{Ektauineq} and \eqref{Probktau1}, we obtain
\[
\begin{aligned}
 \Eb_k\brc{\tau_A^{(k)}}^r & \le N_A^r + r 2^{r-1} \, \sum_{n=N_A}^\infty n^{r-1} \Pb_k\brc{\frac{1}{n} \lambda_{k+n}^k < I  - \varepsilon -\delta}
 \\
 &\le  N_A^r + r 2^{r-1} \, \sum_{n=1}^\infty n^{r-1} \Pb_k\brc{\frac{1}{n} \lambda_{k+n}^k < I  - \varepsilon -\delta}.
 \end{aligned}
\]
This implies inequality \eqref{Ekineq} in Lemma~\ref{Lem:UpperEk}.

If $\pi_k=\rho(1-\rho)^k$ is geometric, condition \eqref{Prior} holds for all $n\ge 1$ with $\mu=|\log(1-\rho)|$, so that    
\[
\log(A/\pi_k)= \log (A/\rho) + \mu \, k \quad \text{and} \quad |\log\Pb(\nu\ge k+n)| = - \log(1-\rho)^{k+n}= (k+n) \mu.
\]
Therefore, the Markov times $\tau_A^{(k)}$, $k=0,1,\dots$ can be written as
\[
 \tau_A^{(k)} = \inf\set{n\ge 1: \lambda_{k+n}^k + \mu \, n  \ge \log(A/\rho)} ,
\] 
and inequality \eqref{Probktau} reduces to
\[
\Pb_k\brc{\tau_A^{(k)} >n} \le \Pb_k\brc{\frac{1}{n}\lambda_{k+n}^k <I  -\varepsilon},
\]
which holds for all $n \ge N_A=1+\lfloor \log(A/\rho)/(I+\mu-\varepsilon) \rfloor$ and all $0<\varepsilon < I +\mu$. Using this inequality and inequality \eqref{Ektauineq} with 
$N_A=1+\lfloor \log(A/\rho)/(I+\mu-\varepsilon) \rfloor$ yields inequality \eqref{EkineqGeom} in Lemma~\ref{Lem:UpperEk} and the proof is complete.
\end{proof}

\begin{proof}[Proof of Lemma~\ref{Lem:LB2}]
Let $N_{\alpha,\varepsilon}= (1-\varepsilon) |\log\alpha|/(I+\mu+\delta)$. The rest of the notation is the same as in the proof of Lemma~\ref{Lem:LB} above.
Similarly to \eqref{A1},
\begin{equation} \label{A1new}
\inf_{ T\in\class}\Eb^{\pi^\alpha}[( T-\nu)^m |  T >\nu]  \ge N_{\alpha,\varepsilon}^m\brcs{1- \alpha - \sup_{ T\in \class}\Pb^{\pi^\alpha}(0 <  T-\nu < N_{\alpha,\varepsilon})}.
\end{equation}
By \eqref{ProbUpper1},
\[
 \sup_{ T\in \class}\Pb^{\pi^\alpha}(0 <  T-\nu < N_{\alpha,\varepsilon})   \le \Pb(\nu > K_{\alpha,\varepsilon}) +  \sum_{k=0}^{K_{\alpha,\varepsilon}}  \pi_k^\alpha  \beta_{\alpha,\varepsilon}^{k} 
 + \exp\set{-\varepsilon^2 |\log\alpha| + (\mu_\alpha+\delta_\alpha) K_{\alpha,\varepsilon}},
 \]
where $K_{\alpha,\varepsilon}= \varepsilon_1 \varepsilon^2 |\log\alpha|/(\mu_\alpha+\delta_\alpha)$ with $0<\varepsilon_1<1$ (in particular, we may take $\varepsilon_1=\varepsilon$). 
Obviously, the last term vanishes as $\alpha\to0$ for all $\varepsilon\in(0,1)$. 
By condition \eqref{Pmaxgen}, the middle term also goes to zero. By condition \eqref{Prior} on the prior, as $\alpha\to0$,
\[
-\log \Pb(\nu > K_{\alpha,\varepsilon}) \sim \mu_\alpha K_{\alpha,\varepsilon} \sim \varepsilon_1\varepsilon^2 |\log\alpha| \to \infty.
\]
Therefore, the first term $\Pb(\nu > K_{\alpha,\varepsilon})$ also approaches zero as $\alpha\to0$ for all $\varepsilon \in(0,1)$.
It follows that  
\begin{equation*}
\sup_{ T\in \class}\Pb^{\pi^\alpha}(0 <  T-\nu < N_{\alpha,\varepsilon}) \to 0 \quad \text{as}~ \alpha\to0
\end{equation*}
and using \eqref{A1new}, we obtain that for all $0<\varepsilon<1$ and $m>0$ as $\alpha\to0$
\[
\inf_{ T\in\class}\Eb^{\pi^\alpha}[( T-\nu)^m |  T >\nu]  \ge (1-\varepsilon)^m\brc{\frac{|\log\alpha|}{I}}^m (1+o(1)).
\]
Since $\varepsilon$ can be arbitrarily small, the lower bound \eqref{LBinclass2} follows.

To prove the lower bound  \eqref{LBTA2} it suffices to replace $\alpha$ in the above argument by $1/A$ and recall that  $T_A\in\class$ if $A=1/\alpha$.
\end{proof}


\end{document}